\newtheoremstyle{theorem}
                 {5pt}
                 {5pt}
                 {}
                 {}
                 {\bfseries}
                 {\newline}
                 {0pt}
                 {}
\theoremstyle{theorem}
\newtheorem{theorem}{Theorem}[section]
\newtheorem{lemma}[theorem]{Lemma}
\newtheorem{definition}[theorem]{Definition}
\numberwithin{theorem}{section}
\numberwithin{equation}{section}
\title{Curvature Conditions for Spatial Isotropy}
\author[Kostas Tzannavaris]{Kostas Tzanavaris}
\author[Pau Amaro Seoane]{Pau Amaro Seoane}
\address[Kostas Tzannavaris]{Higgs Centre for Theoretical Physics, School of Physics and Astronomy, University of Edinburgh, Scotland, UK.}
\email[Kostas Tzannavaris]{kostas.tzan@ed.ac.uk}
\address[Pau Amaro Seoane]{Institute for Multidisciplinary Mathematics, Universitat Politècnica de València, Spain\\
Max Planck Institute for Extraterrestrial Physics\\
Garching, Germany\\
Institute of Applied Mathematics, Academy of Mathematics and Systems Science, CAS, Beijing\\
China, Kavli Institute for Astronomy and Astrophysics, Beijing, China.}
\email[Pau Amaro Seoane]{amaro@riseup.net}
\begin{document}
\begin{abstract}
In the context of mathematical cosmology, the study of necessary and sufficient conditions for a semi-Riemannian manifold to be a (generalised) Robertson-Walker space-time is important. In particular, it is a requirement for the development of initial data to reproduce or approximate the standard cosmological model. Usually these conditions involve the Einstein field equations, which change if one considers alternative theories of gravity or if the coupling matter fields change. Therefore, the derivation of conditions which do not depend on the field equations is an advantage. In this work we present a geometric derivation of such a condition. We require the existence of a unit vector field to distinguish at each point of space two (non-equal) sectional curvatures. This is equivalent for the Riemann tensor to adopt a specific form. Our geometrical approach yields a local isometry between the space and a Robertson-Walker space of the same dimension, curvature and metric tensor sign (the dimension of the largest subspace on which the metric tensor is negative definite). Remarkably, if the space is simply-connected, the isometry is global. Our result generalize to a class of spaces of non-constant curvature the theorem that spaces of the same constant curvature, dimension and metric tensor sign must be locally isometric. Because we do not make any assumptions regarding field equations, matter fields or metric tensor sign, one can readily use this result to study cosmological models within alternative theories of gravity or with different matter fields.
\end{abstract}

\maketitle

\section{Introduction}
\label{sec.1}

In a paper published in 2014, Chen \cite{2014GReGr..46.1833C} showed that if a Lorentz manifold $(M,\,g)$ possesses a timelike vector field $X$ for which there exists a real-valued function $f:M\rightarrow\mathbf{R}$ such that the following condition is fulfilled,
\begin{equation}
    \label{eqn:1-1}
    X_{\alpha ; \,\beta} = f g_{\alpha\beta},
\end{equation}

\noindent
then $M$ can be expressed as the (warped) product $I\times\Sigma$ of an interval $I$ and a spacelike hypersurface $(\Sigma,\,\sigma)$, with the metric $g$ having the form
\begin{equation}
    \label{eqn:1-2}
    g= -\,dt\otimes dt + a^2(t) \sigma.
\end{equation}

\noindent
Here $a$ is a real-valued function on $M$ which is constant on the hypersurfaces $t=\textrm{constant}$. Spaces of this form are usually referred to as \textit{Generalized Robertson-Walker spaces} (GRW). If the curvature of $(\Sigma,\,\sigma)$ is constant, then it is a \textit{Robertson-Walker space} (RW). 

\vspace{0.5em}

Chen's result has been used in subsequent works to derive necessary and sufficient conditions for a space-time to be a GRW or RW space. In particular, the research of \cite{2016JMP....57j2502M,2019JMP....60e2506M,2019IJGMM..1650016D} focuses on the characterisation of GRW and RW spaces via conditions on the curvature tensor, more specifically the Ricci and Weyl ones.

\vspace{0.5em}

In this work we follow a different approach. Instead of assuming the existence of a timelike cocircular vector field, i.e. a field that satisfies~Eq.\eqref{eqn:1-1}, we require the existence of a unit vector field $u$ such that the Riemann tensor is
\begin{align}
    \label{eqn:1-3}
    & R(X,\,u)\,u = fX, \\[5pt]
    \label{eqn:1-4}
    & R(X,\,Y)Z = h\left(g(Y,\,Z)X - g(X,\,Z)Y\right),
\end{align}

\noindent
where $f,\,h$ are real-valued functions and $X,\,Y,\,Z$ are \textit{arbitrary} vector fields perpendicular to $u$. \textit{The main result of our work is proving that this is a necessary and sufficient criterion for the space-time to be a RW space.}

\vspace{0.5em}

The condition above is equivalent to requiring that the vector field $u$ distinguishes at each point $p$ two sectional curvatures in the following way: Let $\Pi_1,\,\Pi_2$ be arbitrary planes on the tangent space $T_p\,M$. If both planes are either perpendicular to $u$ or contain $u$, then their sectional curvatures must be equal. 

\vspace{0.5em}

We note that the curvature of the space is constant if the sectional curvatures of the two arbitrary planes $\Pi_1$ and $\Pi_2$ (with $\Pi_1$ being perpendicular to $u$ and $\Pi_2$ containing $u$) are equal. This scenario corresponds to the well-known result that if two spaces have the same
dimension, metric sign and constant curvature, then they are locally isometric (see e.g. \cite{o1983semi,wolf2011curvature}) and therefore we will not consider this situation in our work. This condition is expressed using the functions $f$ and $h$, as defined by~Eqs.\eqref{eqn:1-3} and \eqref{eqn:1-4} as follows

\begin{equation}
    \label{eqn:1-5}
    h(p)-\varepsilon f(p) \neq 0,\textrm{ for every }p\in M
\end{equation}

\noindent
where $\varepsilon\equiv g(u,u) = \pm 1$. 

\vspace{0.5em}

In Section ~(\ref{sec.Statement}) we introduce the theorem and sketch its proof. The full proof is presented in Section ~(\ref{sec.Proof}), with most of the calculations being done in Sections ~(\ref{Sec.Riem}) and ~(\ref{Sec.Bianchi}), where we determine the Riemann tensor of the spaces under study and subsequently derive some important formulae as a consequence of the Bianchi identities. Lastly, in Sec.~(\ref{sec.Impl}) we discuss the implications of our findings.

\section{Statement of the theorem}
\label{sec.Statement}

\noindent
Before stating the theorem, we will briefly state some definitions.

\begin{definition}
A semi-Riemannian space $(M,\,g)$ is said to be 

\begin{enumerate}

	\item \textit{locally isotropic} with respect to a unit vector field $u$ if and only if the following condition is satisfied: Let $p$ be an arbitrary point of $M$ and $\Lambda$ be a linear isometry of $T_p\, M$ that leaves $u\rvert_p$ invariant. There exists a local isometry $\phi:\mathscr{U}\rightarrow M$ defined on an open 	neighborhood $\mathscr{U}$ of $p$ such that $\phi(p)=p$ and $\phi_*\rvert_p = \Lambda$.

	\item a \textit{generalized Robertson-Walker space} (GRW) with sign $\varepsilon = \pm 1$ if and only if it is the warped product of an open interval $I$ and a semi-Riemannian space $(\Sigma,\sigma)$ such that
	
    \begin{equation}
        \label{eqn:2-1}
    	g = \varepsilon dt\otimes dt + a^2(t)\sigma,
    \end{equation}

    \noindent
    where $a$ is a real-valued function on $M$ which is constant on the hypersurfaces $t=\textrm{constant}$.

	\item a \textit{Robertson-Walker space} (RW) with sign $\varepsilon = \pm 1$ if and only if it is a GRW space, with $(\Sigma,\sigma)$ having constant curvature.
\end{enumerate}

\end{definition}

GRW and RW spaces are usually defined as Lorentz spaces, with the vector field $u=\partial_t$ being time-like. However, as we will prove, our theorem is satisfied for general semi-Riemannian spaces. 

\begin{theorem}
\label{thm-main}

Let $(M,g)$ be a semi-Riemannian space of dimension $n \geq 4$ and of non-constant curvature, and $u$ be a unit vector field. The following propositions are equivalent:

\begin{enumerate}
	\item $(M,g)$ is locally isotropic with respect to $u$.
	\item The Riemann tensor of $(M,g)$ is defined by \eqref{eqn:1-3} and \eqref{eqn:1-4}.
	\item $(M,g)$ is locally isometric to a RW space such that $u=\partial_t$.
\end{enumerate}

In addition, if $M$ is simply-connected, then $M$ is isometric to a RW space.

\end{theorem}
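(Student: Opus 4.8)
The plan is to prove the three-way equivalence as the cycle $(2)\Rightarrow(3)\Rightarrow(1)\Rightarrow(2)$ and then to bootstrap the first implication to a global conclusion when $M$ is simply-connected. The geometric heart is $(2)\Rightarrow(3)$; the implication $(3)\Rightarrow(1)$ is the observation that a warped product over a constant-curvature fibre inherits local isotropy from the model space form, while $(1)\Rightarrow(2)$ is an invariant-theoretic computation. I expect the two genuinely delicate points to be, first, extracting the covariant derivative of $u$ from the curvature hypothesis by means of the Bianchi identities, and second, upgrading a local warped-product decomposition to a global one.

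\emph{$(2)\Rightarrow(3)$.} Assume \eqref{eqn:1-3}--\eqref{eqn:1-4} and write $\hat g=g-\varepsilon\,u^{\flat}\!\otimes u^{\flat}$ for the restriction of $g$ to $u^{\perp}$; these two conditions say exactly that the Riemann tensor equals $h$ times the constant-curvature tensor built from $\hat g$ plus $\varepsilon f$ times the analogous tensor with one factor of $\hat g$ replaced by $u^{\flat}\!\otimes u^{\flat}$. The first task is to pin down $\nabla u$. Feeding this form of the curvature into the commutation identity $\nabla_{a}\nabla_{b}u_{c}-\nabla_{b}\nabla_{a}u_{c}=R_{abcd}\,u^{d}$ and into the second Bianchi identity $\nabla_{[a}R_{bc]de}=0$, and then tracing against $u$ and against $\hat g$, I expect to obtain: (i) $X(f)=X(h)=0$ for every $X\perp u$, so that $f$ and $h$ are constant along $u^{\perp}$; and (ii), after dividing by $h-\varepsilon f$ --- which is where \eqref{eqn:1-5} is used --- that $u$ is geodesic, $\nabla_{u}u=0$, and umbilic, $\nabla_{X}u=\psi\,X$ for $X\perp u$ and some function $\psi$. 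From (ii) the tensor $\nabla_{a}u_{b}=\psi\,\hat g_{ab}$ is symmetric, so $u^{\flat}$ is closed, hence locally $u^{\flat}=dt$ with $g(\nabla t,\nabla t)=\varepsilon$; the level sets $\Sigma_{t}$ are totally umbilic with second fundamental form $\psi$ times the induced metric, and since \eqref{eqn:1-3}--\eqref{eqn:1-4} force the component $R(X,Y,Z,u)$ to vanish for $X,Y,Z\perp u$, the Codazzi equation yields $X(\psi)=0$ along $u^{\perp}$. A totally umbilic foliation with leafwise-constant umbilicity scalar displays $M$ locally as a warped product $g=\varepsilon\,dt\otimes dt+a^{2}(t)\,\sigma$ with $u=\partial_{t}$ and $\psi=a'/a$; alternatively one rescales $u$ to a cocircular field and applies a semi-Riemannian analogue of Chen's theorem \cite{2014GReGr..46.1833C}. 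Finally, the Gauss equation writes the intrinsic curvature of $(\Sigma,\sigma)$ as a combination of $R(X,Y)Z$ (for $X,Y,Z\perp u$) and the umbilic second fundamental form; by \eqref{eqn:1-4} the first piece is a constant-curvature tensor and so is the second, so $(\Sigma,\sigma)$ has constant curvature. Here $\dim\Sigma=n-1\ge 3$ is what makes \eqref{eqn:1-4} a genuine constant-curvature condition on the fibre, and we conclude that $(M,g)$ is locally a RW space with $u=\partial_{t}$.

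\emph{$(3)\Rightarrow(1)$ and $(1)\Rightarrow(2)$.} For $(3)\Rightarrow(1)$: a constant-curvature space is locally isometric to a model space form whose isometry group acts transitively on orthonormal frames, so every linear isometry $L$ of a tangent space of $\Sigma$ is the differential at that point of a local isometry of $\Sigma$ fixing it. Given $p$ and a linear isometry $\Lambda$ of $T_{p}M$ fixing $u|_{p}=\partial_{t}|_{p}$ --- which therefore preserves $u^{\perp}|_{p}$ and restricts there to such an $L$ --- the product of $\mathrm{id}_{I}$ with the associated local isometry of $\Sigma$ is a local isometry $\phi$ of the warped product with $\phi(p)=p$ and $\phi_{*}|_{p}=\Lambda$, so $(M,g)$ is locally isotropic with respect to $u$. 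For $(1)\Rightarrow(2)$: a local isometry fixing $p$ preserves the Riemann tensor, hence $R|_{p}$ is invariant under the stabilizer of $u|_{p}$ in the semi-orthogonal group of $(T_{p}M,g|_{p})$, namely $\mathrm{O}(u^{\perp}|_{p})$. Decomposing algebraic curvature tensors on $T_{p}M=\mathbf{R}u\oplus u^{\perp}$ under this group --- where $n-1\ge 3$ is essential --- the invariant ones form a two-parameter family: the components $R(X,Y,Z,u)$ and $R(X,Y,u,u)$ vanish (once the first Bianchi identity is imposed, no invariant tensor of the relevant type survives), $R(X,u,Y,u)$ must be a multiple of the restriction of $g$ to $u^{\perp}$ --- which, read off through the operator $X\mapsto R(X,u)u$, is \eqref{eqn:1-3} --- and $R(X,Y,Z,W)$ with all arguments in $u^{\perp}$ must be a multiple of the constant-curvature tensor of $(u^{\perp},g|_{u^{\perp}})$ --- which is \eqref{eqn:1-4}; the non-constancy hypothesis, equivalently \eqref{eqn:1-5}, says precisely that these two multiples are not in the ratio that would make $R|_{p}$ a pure constant-curvature tensor. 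This closes the cycle.

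\emph{The global statement.} Suppose $M$ is connected and simply-connected. The identities $\nabla_{u}u=0$ and $\nabla_{X}u=\psi X$ obtained above hold pointwise, so $u^{\flat}$ is a globally closed $1$-form; since $H^{1}_{\mathrm{dR}}(M)=0$ it is exact, $u^{\flat}=dt$ for a globally defined function $t\colon M\to\mathbf{R}$ with $g(\nabla t,\nabla t)=\varepsilon\ne 0$. Thus $t$ is a submersion onto an interval $I$; its level sets $\Sigma_{c}=t^{-1}(c)$ are totally umbilic hypersurfaces of constant curvature, mutually homothetic under the unit-speed geodesic flow of $u$, and the curvature of the abstract fibre is independent of $c$ by connectedness and the leafwise-constancy of $h$ and $\psi$. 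Using the local warped-product charts as local trivializations of $t$ and flowing along $u$, one assembles a global isometry $I\times_{a}\Sigma\to M$ carrying $\partial_{t}$ to $u$, so $M$ is a RW space. Simple-connectivity enters exactly here: it makes $u^{\flat}$ exact, so that the time function $t$ is globally defined and no monodromy obstructs the assembly. The step I would treat most carefully is this patching --- one must control the topology of the level sets and the domain of the flow of $u$ so that the local decompositions fit together over all of $I$ --- and, together with the Bianchi-identity analysis in $(2)\Rightarrow(3)$, it is where I expect the principal difficulty to lie.
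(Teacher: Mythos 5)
Your overall architecture coincides with the paper's: the cycle $(2)\Rightarrow(3)\Rightarrow(1)\Rightarrow(2)$, with the second Bianchi identity controlling $\nabla u$, $df$ and $dh$, a foliation by totally umbilic leaves whose constant curvature comes from the Gauss equation and Schur's lemma, and an integration of the umbilicity factor to produce the warped product. Your $(1)\Rightarrow(2)$ via invariant theory of the stabilizer of $u|_p$, and your use of the Codazzi equation to get leafwise constancy of $\psi$, are legitimate variants of what the paper does by explicit rotations and boosts and by a second application of the Bianchi identity, respectively.

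The genuine gap is in steps (i)--(ii) of your $(2)\Rightarrow(3)$. The second Bianchi identity does not deliver $X(f)=0$ and $\nabla_u u=0$ separately; what it delivers (the paper's Theorem \ref{lem.OrthCond}) is the coupled relation $df(X)=-(h-\varepsilon f)\,g(X,\nabla_u u)$ for $X\perp u$, together with the symmetry \eqref{eqn:3-2} and $dh(X)=0$. No further trace decouples these: the component of the Bianchi identity on the arguments $(Z,X,u;u,\cdot)$ only reproduces the same relation. Consequently you are not entitled, at that stage, to conclude that $u$ is geodesic, hence not entitled to conclude that $u^{\flat}$ is closed, and your route to the local time function breaks down. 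The paper's fix is to integrate the rescaled form $(h-\varepsilon f)u^{\flat}$ instead: the coupled relation is exactly what makes its mixed ($X$ versus $u$) component closed, while \eqref{eqn:3-2} handles the tangential component, so Poincar\'e's lemma applies without knowing that $u$ is geodesic. Only after the foliation exists does Schur's lemma force the leaf curvature $K_\tau=h+\varepsilon\left[\tfrac{dh(u)}{2(h-\varepsilon f)}\right]^{2}$ to be constant on each leaf, and combining this with $dh(X)=0$ one finally deduces $df(X)=0$ and hence, via \eqref{eqn:3-1} and \eqref{eqn:1-5}, that $\nabla_u u=0$. You should reorganize your argument along these lines or supply an independent proof that $df$ annihilates $u^{\perp}$; as written, the deduction order cannot be carried out. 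The same caveat touches your global step, where you again invoke closedness of $u^{\flat}$ --- harmless once the local analysis is repaired, since exactness of $(h-\varepsilon f)u^{\flat}$ on a simply-connected $M$ serves the same purpose.
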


The proof follows a derivation of the RW metric given by \cite{kriele1999spacetime}, in which the author uses the Einstein field equations with matter described as a perfect fluid.  The proof that $(1)$ implies $(2)$ is presented in the following section. The proof that $(3)$ implies $(1)$ relies on the local isometry of a space of constant curvature to one of the simply-connected space forms $\mathbf{R}_{\,\nu}^{n-1},\,\mathbf{S}_\nu^{n-1}(R),\,\mathbf{H}_\nu^{n-1}(R)$ (see \cite{o1983semi} for the definition of these spaces). It therefore remains to prove that $(2)$ implies $(3)$.

\vspace{0.5em}

The first (and most demanding) step, is to show that $u$ is locally orthogonal to a one-parameter family of hypersurfaces of $M$. This follows from an integrability condition via Poincar\'e's lemma, from which it also follows that the hypersurfaces can be extended globally if $M$ is simply-connected. This will be proved by a set of conditions (involving $f$ and $h$) derived from the second Bianchi identity. The second step is to show that every hypersurface has a constant curvature, which can be derived from Gauss formula for curvatures of hypersurfaces. The third and last step is to prove that for every vector field $X$ orthogonal to $u$, $df(X)=0$ and $dh(X)=0$. Once we have proved these three steps, the proof of the theorem is straightforward, as we will see.


\section{The Riemann tensor of an isotropic space}
\label{Sec.Riem}

Throughout this section, we will denote by $R$ the Riemann tensor of a semi-Riemannian manifold $(M,g)$ that is locally isotropic with respect to a unit vector field $u$, and whose dimension is $\geq 4$.

\begin{lemma}

\label{lem.RXUY}
Let $x,y\in u\rvert^\perp_p$ two unit vectors. Then

\begin{equation}
    R(x,u)y = -R(y,u)x.\label{eqn:A-1}
\end{equation}

\noindent 
Additionally, if $x,y$ are of the same causal character then

\begin{equation}
    R(x,u)x = R(y,u)y,
\end{equation}

\noindent 
while if $x,y$ are of different causal character then

\begin{equation}
    R(x,u)x = -R(y,u)y.\label{eqn:A-3}
\end{equation}
\end{lemma}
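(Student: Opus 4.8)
The plan is to exploit the defining property of local isotropy: at each point $p$, any linear isometry $\Lambda$ of $T_pM$ fixing $u|_p$ is realized (infinitesimally) by a local isometry $\phi$, and since local isometries preserve the Riemann tensor, $\Lambda$ must commute with $R$ in the appropriate tensorial sense. Concretely, for any such $\Lambda$ and any vectors $a,b,c$ we have $\Lambda\bigl(R(a,b)c\bigr) = R(\Lambda a,\Lambda b)(\Lambda c)$. The whole lemma then reduces to choosing $\Lambda$ cleverly.

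For the antisymmetry relation $R(x,u)y = -R(y,u)x$: I would first note this is \emph{not} a consequence of isotropy alone but a general curvature identity, so the cleanest route is to observe that $R(x,u)y + R(y,u)x$ is symmetric in $x,y$ and apply the first Bianchi identity together with the symmetries $R(x,u)y = R(y,x)u + R(u,x)y$ — actually more directly, one polarizes the identity $R(x+y,u)(x+y) = R(x,u)x + R(x,u)y + R(y,u)x + R(y,u)y$ against the quantity $R(x,u)x$, which will be pinned down by the isotropy argument below; alternatively one uses that $g(R(x,u)y,w)$ has the pair-symmetry $g(R(x,u)y,w) = g(R(y,w)x \ldots)$ type manipulations. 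I expect the slick version is: antisymmetry of the map $x\mapsto R(x,u)$ in the sense $g(R(x,u)y,z)$ — use $R_{xuyz} = R_{yzxu}$ and first Bianchi on the triple $(x,y,z)$ holding $u$ fixed is not immediate, so I would instead derive \eqref{eqn:A-1} as a corollary of the stronger fact that, by isotropy, $R(x,u)x$ depends only on the causal character of $x$ (proved next), and then polarize.

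For the causal-character statements: fix unit $x,y \in u|_p^\perp$ of the \emph{same} causal character, i.e. $g(x,x) = g(y,y) = \pm 1$ with the same sign. Then there is a linear isometry $\Lambda$ of $T_pM$ with $\Lambda u = u$, $\Lambda x = y$ (extend by any isometry on the orthogonal complement — this exists precisely because $x,y$ have the same norm). Isotropy gives a local isometry inducing $\Lambda$, hence $\Lambda\bigl(R(x,u)x\bigr) = R(y,u)y$. To conclude $R(x,u)x = R(y,u)y$ rather than merely $\Lambda$-related, I then use \eqref{eqn:1-3}: $R(x,u)u = fx$, and by the symmetries of $R$ this forces $R(x,u)x$ to be a multiple of... no — rather, one shows $R(x,u)x$ lies in a $\Lambda$-fixed line or computes $g(R(x,u)x, w)$ for $w$ ranging over a basis and uses isotropy with $\Lambda$ chosen to fix $x,u$ but move $w$, forcing the component to vanish unless $w \parallel x$; combined with $g(R(x,u)x,x) = g(R(u,x)x,u)\cdot(-1)$-type bookkeeping and \eqref{eqn:1-3}. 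For \emph{opposite} causal character, $g(x,x) = -g(y,y)$, so no isometry sends $x\to y$; instead send $x \to y$ only after rescaling, or better: pick $\Lambda$ with $\Lambda x = x$, $\Lambda u = u$ but $\Lambda$ acting as $-1$ on some complementary vector, track signs in the curvature symmetries — the sign flip $g(x,x) = -g(y,y)$ propagates through the identity $g(R(x,u)u,x) = \varepsilon f g(x,x)$ versus the $y$ version, yielding the minus sign in \eqref{eqn:A-3}.

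**Main obstacle.** The delicate point is passing from "$\Lambda$ intertwines $R$" to the \emph{identities} $R(x,u)x = \pm R(y,u)y$ as vectors (not just up to $\Lambda$): this requires showing $R(x,u)x$ is determined by $g(R(x,u)x,\cdot)$ on just the span of $x$ (that its component along any $w\perp x, w\perp u$, and along $u$, is forced to vanish or to a fixed value by applying isotropy with a well-chosen reflection $\Lambda$ fixing $x$ and $u$). I expect to spend most of the effort checking that such reflections exist in the semi-Riemannian setting (they do, since fixing a nondegenerate $2$-plane $\mathrm{span}(x,u)$ leaves a nondegenerate complement on which $-\mathrm{id}$ is an isometry) and carefully tracking the $\varepsilon$ and norm signs so that the three cases \eqref{eqn:A-1}, the same-character case, and \eqref{eqn:A-3} come out with the correct signs.
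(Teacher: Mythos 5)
Your overall strategy --- exploit linear isometries of $T_pM$ fixing $u\rvert_p$, which by local isotropy intertwine the curvature operator, and then choose them cleverly --- is the paper's strategy too, and your treatment of the same-causal-character case is essentially workable: an isometry with $\Lambda u=u$, $\Lambda x=y$ exists by Witt extension, and your reflection trick (act by $-\mathrm{id}$ on a nondegenerate complement of the nondegenerate plane $\mathrm{span}(x,u)$) does force $g\big(R(x,u)x,w\big)=0$ for every non-null $w\perp x,u$, while $g\big(R(x,u)x,x\big)=0$ by skew-symmetry in the last two slots; hence $R(x,u)x$ is parallel to the $\Lambda$-fixed vector $u$ and the conjugation collapses to an equality. (The paper instead runs a one-parameter rotation in $\mathrm{span}(x,y)$ and matches coefficients in $\theta$; for this case the two devices are interchangeable.)

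There are, however, two genuine gaps. First, \eqref{eqn:A-1} is \emph{not} a general curvature identity: the algebraic symmetries together with the first Bianchi identity control only the antisymmetric part, $g\big(R(x,u)y,z\big)-g\big(R(y,u)x,z\big)=-g\big(R(x,y)z,u\big)$, and say nothing about the symmetric part $g\big(R(x,u)y,z\big)+g\big(R(y,u)x,z\big)$, which is generically nonzero; isotropy is essential here. Your fallback --- polarize $R(x+y,u)(x+y)$ once $R(z,u)z$ is pinned down for unit $z$ --- works when $x,y$ have the same causal character but fails when they do not, since $x+y$ is then null and cannot be renormalized. Second, and more seriously, your argument for \eqref{eqn:A-3} is circular: when $g(x,x)=-g(y,y)$ no isometry fixing $u$ carries $x$ to a multiple of $y$, and the relation $g\big(R(x,u)u,x\big)=\varepsilon f g(x,x)$ you invoke --- with a single $f$ valid for both causal characters --- is equivalent to the statement being proved (it is Eq.~\eqref{eqn:1-3}, which the paper derives \emph{from} this lemma). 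What is missing is a device that mixes the two causal characters: the paper uses the hyperbolic rotations \eqref{eqn:A-15}--\eqref{eqn:A-17} in $\mathrm{span}(x,y)$, and it is the coefficient of $\sinh\theta\cosh\theta$ in the resulting invariance identity that produces the crucial plus sign in $g\big(R(x,u)x+R(y,u)y,z\big)=0$. An alternative repair within your framework: polarize with $x+\lambda y$ for $|\lambda|<1$, which stays in the causal class of $x$, so that $R(x+\lambda y,u)(x+\lambda y)=(1-\lambda^2)R(x,u)x$; matching powers of $\lambda$ then yields \eqref{eqn:A-1} and \eqref{eqn:A-3} simultaneously. As written, the proposal does not close this case.
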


\begin{proof}

First, suppose that the vectors x,y are of the same causal character. Consider
the isometry $\ell$ on the plane spanned by $x,y$ defined by the relation
\begin{align}
    & \ell x = \cos\theta x - \sin\theta y,\label{eqn:A-4}\\
    & \ell y = \sin\theta x + \cos\theta y,\label{eqn:A-5}
\end{align}

\noindent 
leaving invariant any vector $z$ which is orthogonal to that plane:
\begin{equation}
    \ell z = z, \;\; z\perp x,y.\label{eqn:A-6}
\end{equation}

\noindent 
Because we are working with an isotropic space, we have
\begin{align}
g\big(R(x,u)y,z\big)	&=	g\big(R(\ell x,u)\ell y,z\big) \label{eqn:A-7}\\
						&=	\sin\theta g\big(R(\ell x,u)x,z\big) + \cos\theta g\big(R(\ell x,u)y,z\big)\nonumber\\
						&=	\begin{aligned}[t]
								& \sin\theta\cos\theta g\big(R(x,u)x,z\big) - \sin^2\theta g\big(R(y,u)x,z\big)\\
								& + \cos\theta^2 g\big(R(x,u)y,z\big)-\sin\theta\cos\theta g\big(R(y,u)y,z\big)
							\end{aligned}\nonumber\\
						&=	\begin{aligned}[t]
								& - \sin^2\theta \Big[g\big(R(x,u)y,z\big) + g\big(R(y,u)x,z\big)\Big]\\
								& + \sin\theta\cos\theta \Big[g\big(R(x,u)x,z\big) - g\big(R(y,u)y,z\big)\Big]\\
								& + g\big(R(x,u)\,y,z\big).
							\end{aligned}\nonumber
\end{align}

\noindent 
Because this equation holds for every real number $\theta$, we have that
\begin{align}
    & g\big(R(x,u)y + R(y,u)x,z\big) = 0,\label{eqn:A-8}\\
    & g\big(R(x,u)x - R(y,u)y,z\big) = 0.\label{eqn:A-9}
\end{align}

\noindent 
We now derive the components parallel to the vectors $x,y$. Since
\begin{equation}
    R(\ell x,\ell y) = R(\cos\theta x - \sin\theta y, \sin\theta x + \cos\theta y) = R(x,y),
\label{eqn:A-10}
\end{equation}

\noindent 
it follows that
\begin{align}
g\big(R(x,u)y,x\big) 	&= g\big(R(\ell y,\ell x)\ell x,u\big)  \label{eqn:A-11}\\
                        &= g\big(R(y,x)\ell x,u\big)\nonumber\\
						&= \cos\theta g\big(R(y,u)x,u\big) - \sin\theta g\big(R(y,x)y,u\big)\nonumber\\
						&= \cos\theta g\big(R(x,u)y,x\big) - \sin\theta g\big(R(y,u)y,x\big).\nonumber
\end{align}

\noindent 
For $\theta=\pi$, the last equation \eqref{eqn:A-11} yields
\begin{equation}
    g\big(R(x,u)y,x\big) = 0 = -g\big(R(y,u)x,x\big)
\label{eqn:A-12}
\end{equation}

\noindent 
and therefore
\begin{equation}
    g\big(R(y,u)y,x\big) = 0.\label{eqn:A-13}
\end{equation}

\noindent 
Interchanging the vectors $x$ and $y$ in \eqref{eqn:A-13} gives us
\begin{equation}
    g\big(R(x,u)x,y\big) = 0,
\label{eqn:A-14}
\end{equation}

\noindent 
proving the first half of the lemma.

\vspace{0.5em}

Suppose now that $x,y$ are of different causal character. We define the isometry $\Lambda$ on the 
plane spanned by $x,y$ which leaves any vector z orthogonal to that plane invariant as
\begin{align}
    & \Lambda x = \cosh\theta x + \sinh\theta y,\label{eqn:A-15}\\
    & \Lambda y = \sinh\theta x + \cosh\theta y,\label{eqn:A-16}\\
    & \Lambda z = z, \;\; z\perp x,y.\label{eqn:A-17}
\end{align}

\noindent 
Again, since we are dealing with isotropic spaces,
\begin{align}
g\big(R(x,u)y,z\big)	&=	g\big(R(\Lambda x,u)\Lambda y,z\big)\label{eqn:A-18}\\
						&=	\begin{aligned}[t]
								& g\big(R(x,u)y,z\big)\\
								& + \sinh^2\theta \Big[g\big(R(x,u)y,z\big) + g\big(R(y,u)x,z\big)\Big]\\
								& + \sinh\theta\cosh\theta\Big[g\big(R(x,u)x,z\big) + g\big(R(y,u)y,z\big)\Big].
							\end{aligned}\nonumber
\end{align}

\noindent 
This relation holds for every real number $\theta$ and thus
\begin{align}
    g\big(R(x,u)y + R(y,u)x,z\big) = 0,\label{eqn:A-19}\\
    g\big(R(x,u)x + R(y,u)y,z\big) = 0,\label{eqn:A-20}
\end{align}

\noindent 
The isometry $\Lambda$ also preserves the Riemann operator
\begin{equation}
    R(\Lambda x,\Lambda y) = R(\cosh\theta x + \sinh\theta y, \sinh\theta x + \cosh\theta y) = R(x,y),
\label{eqn:A-21}
\end{equation}

\noindent 
from which it follows that
\begin{align}
g\big(R(x,u)y,x)\big) 	&= g\big(R(\Lambda y,\Lambda x)\Lambda x,u\big)\nonumber\\
						&= g\big(R(y,x)\Lambda x,u\big)\nonumber\\
						&= \cosh\theta g\big(R(y,x)x,u\big) + \sinh\theta g\big(R(y,x)y,u\big)\nonumber\\
						&= \cosh\theta g\big(R(x,u)y,x\big) + \sinh\theta g\big(R(y,u)y,x\big).\label{eqn:A-22}
\end{align}

\noindent 
As such
\begin{equation}
    g\big(R(x,u)y,x\big) = -g\big(R(y,u)x,x\big) = 0,
\label{eqn:A-23}
\end{equation}

\noindent 
and as a consequence
\begin{equation}
    g\big(R(x,u)x,y\big) = g\big(R(y,u)y,x\big) = 0,
\label{eqn:A-24}
\end{equation}

\noindent 
which completes the proof.
\end{proof}

\begin{lemma}
\label{lem.XYZOrth}

\noindent 
If $X,Y,Z$ vector fields orthogonal to $u$, then the vector field $R(X,Y)Z$ is
also orthogonal to $u$.

\end{lemma}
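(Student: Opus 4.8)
The plan is to show that $g(R(X,Y)Z,u)=0$ for arbitrary vector fields $X,Y,Z$ perpendicular to $u$. By the symmetries of the Riemann tensor, $g(R(X,Y)Z,u) = g(R(Z,u)\,?,?)$-type manipulations let us rewrite this pairing; specifically, using the pair-symmetry $g(R(X,Y)Z,u) = g(R(Z,u)X,Y)$. So it suffices to understand the vector $R(Z,u)X$ for $Z,X\perp u$, and in particular to show its component along $Y$ (an arbitrary vector orthogonal to $u$) combines to zero, or more directly that $R(Z,u)X$ has no component forcing a nonzero pairing. Since this reduces everything to the operator $R(\cdot\,,u)\cdot$ acting on vectors orthogonal to $u$, Lemma \ref{lem.RXUY} is exactly the tool: it already pins down $R(x,u)y$ on a basis of $u\rvert_p^\perp$.

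First I would fix a point $p$ and an orthonormal basis $\{e_1,\dots,e_{n-1}\}$ of $u\rvert_p^\perp$, so that it is enough to prove $g(R(e_i,e_j)e_k,u)=0$ for all $i,j,k$. Rewriting via pair symmetry, $g(R(e_i,e_j)e_k,u) = g(R(e_k,u)e_i,e_j)$, so the claim becomes: $R(e_k,u)e_i$ is orthogonal to every $e_j$, i.e. $R(e_k,u)e_i$ is parallel to $u$. Now Lemma \ref{lem.RXUY} handles the two cases. If $i\neq k$, equation \eqref{eqn:A-1} together with the antisymmetry already present, and more to the point equations \eqref{eqn:A-12}, \eqref{eqn:A-13}, \eqref{eqn:A-14}, \eqref{eqn:A-23}, \eqref{eqn:A-24} show that $g(R(e_k,u)e_i,e_j)=0$ whenever $j\in\{i,k\}$; for $j\notin\{i,k\}$ one invokes the vanishing of $g(R(x,u)y,z)$ for three mutually orthogonal unit vectors, which is what the isometry argument in Lemma \ref{lem.RXUY} (equations \eqref{eqn:A-8}, \eqref{eqn:A-9}, \eqref{eqn:A-19}, \eqref{eqn:A-20}) delivers once one also antisymmetrizes appropriately. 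If $i=k$, then $g(R(e_i,u)e_i,e_j)=0$ for all $j$ is precisely \eqref{eqn:A-14} (or \eqref{eqn:A-24}). In all cases $R(e_k,u)e_i\perp e_j$, hence $R(e_k,u)e_i\parallel u$, hence $g(R(e_i,e_j)e_k,u)=0$.

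Since $R(X,Y)Z$ is tensorial and $\{e_i\}$ spans $u\rvert_p^\perp$, expanding $X,Y,Z$ in this basis and using multilinearity gives $g(R(X,Y)Z,u)=0$ for all $X,Y,Z\perp u$, i.e. $R(X,Y)Z\perp u$. I expect the only mildly delicate point to be bookkeeping the several sub-cases (coincidences among the indices $i,j,k$ and the differing causal characters of the basis vectors, which flip signs but never the vanishing), all of which are already covered by the statements collected in Lemma \ref{lem.RXUY}; there is no genuine obstacle, only care in citing the right line of the previous lemma for each configuration.
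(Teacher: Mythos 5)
Your reduction via pair symmetry to the operator $R(\cdot\,,u)\cdot$ and your treatment of the coincident-index cases are fine, but there is a genuine gap in the only nontrivial case, namely $g\big(R(e_k,u)e_i,e_j\big)$ with $i,j,k$ pairwise distinct (which is the generic case once $\dim M\geq 4$). The equations you cite from Lemma \ref{lem.RXUY} --- \eqref{eqn:A-8}, \eqref{eqn:A-9}, \eqref{eqn:A-19}, \eqref{eqn:A-20} --- do not deliver the vanishing of $g\big(R(x,u)y,z\big)$ for three mutually orthogonal $x,y,z\perp u$. What they give, combined with the skew symmetries and pair symmetry of the Riemann tensor, is precisely that the trilinear form $R_u(x,y,z)=g\big(R(x,y)z,u\big)$ is totally antisymmetric on $u\rvert_p^\perp$; ``antisymmetrizing appropriately'' can never produce more than that, and a nonzero $3$-form is perfectly consistent with total antisymmetry, so no amount of bookkeeping of sub-cases closes this. (One can also check that further rotations or boosts in planes orthogonal to $u$ impose no new constraints on this component, so the isometries used in Lemma \ref{lem.RXUY} genuinely stop short of what you need.)

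The paper closes exactly this gap with the first Bianchi identity: once $R_u$ is known to be totally antisymmetric, the cyclic sum $R_u(x,y,z)+R_u(y,z,x)+R_u(z,x,y)$ equals $3R_u(x,y,z)$ because cyclic permutations are even, and the Bianchi identity forces this sum to vanish, cf.\ \eqref{eqn:A-29}. That single line is the essential content of the lemma and is absent from your argument. An alternative repair that stays closer to your plan would be to invoke local isotropy directly for the reflection fixing $u$, $x$, $z$ and sending $y\mapsto -y$, which gives $g\big(R(x,u)y,z\big)=-g\big(R(x,u)y,z\big)=0$ at once; but that uses the isotropy hypothesis itself rather than only the statements recorded in Lemma \ref{lem.RXUY}, which is all your proposal permits itself to cite.
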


\begin{proof}
The proof is basically a combinatorial argument involving the first Bianchi
identity. Without loss in generality, we can suppose that $X,Y,Z$ are
orthogonal to each other. For some point $p\in M$ let $x=X\rvert_p$,
$x=Y\rvert_p$ and $z=Z\rvert_p$. We will first show that the mapping

\begin{equation}
R_u(x,y,z)\equiv g\big(R(x,y)z,u\big)\label{eqn:A-25}
\end{equation}

\noindent 
is antisymmetric. Because of the antisymmetry property, we have that 
$R_u(x,y,z) = -R_u(y,x,z)$. It also holds that
\begin{align}
R_u(x,y,z) 	&= g\big(R(z,u)x,y\big) \label{eqn:A-26}\\
			&= -g\big(R(x,u)z,u\big) \nonumber\\
			&= -g\big(R(z,y)x,u\big) \nonumber\\
			&= -R_u(z,y,z)\nonumber
\end{align}

\noindent 
and
\begin{align}
R_u(x,y,y)	&= g\big(R(x,y)y,u\big) \label{eqn:A-27}\\
			&= g\big(R(x,u)x,y\big) \nonumber\\
			&= -g\big(R(x,u)y,y\big) \nonumber\\
			&=0,\nonumber
\end{align}

\noindent 
which means that
\begin{equation}
    0 = R_u(x,y+z,y+z) = R_u(x,y,z) + R_u(x,z,y),
\label{eqn:A-28}
\end{equation}

\noindent 
showing that $R_u$ is antisymmetric. Finally, from the first Bianchi identity
it follows that
\begin{equation}
    0=R_u(x,y,z) + R_u(y,z,x) + R_u(z,x,y) = 3R_u(x,y,z).
\label{eqn:A-29}
\end{equation}

\end{proof}

\begin{lemma}
\label{lem.PiXY}

Let $\Pi_{x,y}$ be the non-degenerate plane of $T_pM$ spanned by two vectors $x,y$.

\begin{enumerate}
	\item For every pair of unit vectors $x,y$ orthogonal to $u\rvert_p$ and to each other,
	the sectional curvatures $K_p(\Pi_{x,u}),K_p(\Pi_{y,u})$ of the planes $\Pi_{x,u},\Pi_{y,u}$ are equal:
	\begin{equation}
		K_p(\Pi_{x,u}) = K_p(\Pi_{y,u}).\label{eqn:A-30}
	\end{equation}
	
	\item For every three vectors $x,y,z$ orthogonal to $u\rvert_p$ and to each other, the sectional
	curvatures $K_p(\Pi_{x,y}),K_p(\Pi_{y,z})$ of the planes $\Pi_{x,y},\Pi_{y,z}$ are equal:
	\begin{equation}
		K_p(\Pi_{x,y}) = K_p(\Pi_{y,z}).\label{eqn:A-31}
	\end{equation}
\end{enumerate}
\end{lemma}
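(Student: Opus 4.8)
The plan is to reduce both assertions to facts already in hand: part~(1) to Lemma~\ref{lem.RXUY}, and part~(2) to a direct application of local isotropy, in each case using only the algebraic symmetries of the Riemann tensor and the definition of the sectional curvature of a non-degenerate plane,
\[
K_p(\Pi_{a,b}) = \frac{g\big(R(a,b)b,a\big)}{g(a,a)\,g(b,b) - g(a,b)^2}
\]
(the equality being asserted is insensitive to the sign convention for $R$).

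For part~(1), let $x,y\in u\rvert^\perp_p$ be the given orthonormal pair and write $\varepsilon_x = g(x,x)$, $\varepsilon = g(u,u)$. Since $\Pi_{x,u}$ is non-degenerate, $K_p(\Pi_{x,u}) = g\big(R(x,u)u,x\big)/(\varepsilon_x\varepsilon)$, and the pair symmetry of $R$ together with the antisymmetry of the operator $R(\,\cdot\,,\,\cdot\,)$ rewrites the numerator as $g\big(R(x,u)u,x\big) = g\big(R(u,x)x,u\big) = -g\big(R(x,u)x,u\big)$, and likewise for $y$. Lemma~\ref{lem.RXUY} gives $R(x,u)x = R(y,u)y$ when $x,y$ have the same causal character (in which case $\varepsilon_x = \varepsilon_y$) and $R(x,u)x = -R(y,u)y$ when they do not (in which case $\varepsilon_x = -\varepsilon_y$). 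In both cases the two sign flips cancel, and $K_p(\Pi_{x,u}) = K_p(\Pi_{y,u})$.

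For part~(2), take $x,y,z$ mutually orthogonal and orthogonal to $u$; rescaling, we may assume they are unit. The plane $\Pi_{x,z}$ is non-degenerate, so $T_pM$ splits as the orthogonal sum $\Pi_{x,z}\oplus\Pi_{x,z}^\perp$ with $u\rvert_p,y\in\Pi_{x,z}^\perp$, and $\Pi_{x,z}$ carries a one-parameter family of isometries $\Lambda_\theta$ — rotations $\Lambda_\theta x = \cos\theta\,x - \sin\theta\,z$ if $x,z$ have equal causal character, and hyperbolic boosts $\Lambda_\theta x = \cosh\theta\,x + \sinh\theta\,z$ if not — extended by the identity on $\Pi_{x,z}^\perp$. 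Each $\Lambda_\theta$ is then a linear isometry of $T_pM$ fixing $u\rvert_p$ and $y$, so local isotropy supplies a local isometry $\phi$ with $\phi(p)=p$ and $\phi_*\rvert_p=\Lambda_\theta$, whence $g\big(R(x,y)y,x\big)=g\big(R(\Lambda_\theta x,y)y,\Lambda_\theta x\big)$. Expanding the right-hand side by bilinearity gives a polynomial in $\cos\theta,\sin\theta$ (resp.\ $\cosh\theta,\sinh\theta$) that must be independent of $\theta$; comparing coefficients forces $g\big(R(x,y)y,x\big)=g\big(R(z,y)y,z\big)$ in the rotation case and $g\big(R(x,y)y,x\big)=-g\big(R(z,y)y,z\big)$ in the boost case. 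Since accordingly $g(x,x)=g(z,z)$ or $g(x,x)=-g(z,z)$, dividing through by $g(y,y)g(x,x)$ yields $K_p(\Pi_{x,y})=K_p(\Pi_{y,z})$ in either case.

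The one place that requires care is the indefinite case of part~(2): when $x$ and $z$ have opposite causal character the non-trivial isometries of $\Pi_{x,z}$ are hyperbolic boosts rather than rotations, so one must use $\Lambda_\theta x = \cosh\theta\,x + \sinh\theta\,z$, $\Lambda_\theta z = \sinh\theta\,x + \cosh\theta\,z$, and keep track of the extra sign coming from $g(x,x) = -g(z,z)$; this mirrors exactly the hyperbolic half of the proof of Lemma~\ref{lem.RXUY}. Everything else is routine manipulation of the curvature symmetries, so I expect no further difficulty.
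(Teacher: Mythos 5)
Your proof is correct and takes essentially the same route as the paper: part (1) is reduced to Lemma~\ref{lem.RXUY} via the pair symmetry of $R$, and part (2) applies local isotropy to the rotation/boost acting on the plane spanned by $x$ and $z$ while fixing $u\rvert_p$ and $y$, then compares coefficients in $\theta$. Your write-up is in fact slightly more complete than the paper's, which treats only the mixed-causal-character case in detail and leaves the compensating signs in the numerator $g\big(R(x,y)y,x\big)=-g\big(R(z,y)y,z\big)$ and the denominator $g(x,x)=-g(z,z)$ implicit.
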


\begin{proof}

The first part is a direct consequence of Lemma \ref{lem.RXUY}, so we
need to prove only the second part. In the case in which all three vectors have
the same causal character this follows by considering a rotation on the plane
spanned by $x$ and $y$. We will therefore prove only the case in which one of
the vectors -- say $z$ -- is of different causal character than $x,y$. Let us
consider the isometry $\Lambda$ defined by
\begin{align}
    & \Lambda x = \cosh\theta x + \sinh\theta z,\label{eqn:A-32}\\
    & \Lambda z = \sinh\theta x + \cosh\theta z,\label{eqn:A-33}\\
    & \Lambda w = w, \;\; w\perp x,z. \label{eqn:A-34}
\end{align}

\noindent 
Then

\begin{align}
g\big(R(x,y)y,x\big)	&= g\big(R(\Lambda x,y)y,\Lambda x\big) = \label{eqn:A-35}\\
						&= \cosh\theta g\big(R(x,y)y,\Lambda x\big) + \sinh\theta g\big(R(z,y)y,\Lambda x\big)\nonumber\\
						&=	\begin{aligned}[t]
								& \cosh^2\theta g\big(R(x,y)y,x\big) + \cosh\theta \sinh\theta g\big(R(x,y)y,z\big)\\
								& + \cosh\theta \sinh\theta g\big(R(z,y)y,x\big) + \sinh^2\theta g\big(R(z,y)y,z\big)
							\end{aligned}\nonumber\\
						&=	\begin{aligned}[t]
								& \sinh^2\theta \Big[g\big(R(x,y)y,x\big) + g\big(R(z,y)y,z\big)\Big]\\
								& + \cosh\theta \sinh\theta \Big[g\big(R(z,y)y,x\big) + g\big(R(x,y)y,z\big)\Big]\\
								& + g\big(R(x,y)y,x\big).
							\end{aligned}\nonumber
\end{align}

\noindent 
Since this holds for every real number $\theta$,

\begin{equation}
    g\big(R(x,y)y,x\big) = -g\big(R(y,x)x,y\big), 
\label{eqn:A-36}
\end{equation}

\noindent 
which proves the lemma.
\end{proof}

\noindent
We are now in a position to prove that the condition of local isotropy completely specifies the Riemann tensor.

\begin{theorem}
If $(M,g)$ is a semi-Riemannian manifold with dimension $\geq 4$ that is locally isotropic with respect to a unit vector $u$, its Riemann tensor satisfies Eqs.~\eqref{eqn:1-3}, \eqref{eqn:1-4}. 
\end{theorem}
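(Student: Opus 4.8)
The plan is to fix an arbitrary point $p\in M$ and use Lemmas~\ref{lem.RXUY}--\ref{lem.PiXY} to pin down $R_p$ separately on the $u$-direction and on $u|_p^\perp$. Since $u$ is a unit vector, $\varepsilon:=g(u,u)=\pm1\neq0$, so $T_pM=\mathbf{R}\,u|_p\oplus u|_p^\perp$ with $u|_p^\perp$ non-degenerate, and the two identities \eqref{eqn:1-3}, \eqref{eqn:1-4} are precisely the statements that the ``mixed'' operator $X\mapsto R(X,u)u$ is a scalar multiple of the identity on $u|_p^\perp$ and that $R$ restricted to $u|_p^\perp$ is the curvature tensor of a constant-curvature space. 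Besides the three Lemmas, I would only invoke the algebraic symmetries of the Riemann tensor.

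For \eqref{eqn:1-3} (Step~1), I would fix a unit vector $x\in u|_p^\perp$ and first note that skew-symmetry of the operator $R(x,u)$ gives $g(R(x,u)u,u)=0$, while for any unit $y\in u|_p^\perp$ orthogonal to $x$ the pair symmetry $g(R(x,u)y,u)=g(R(y,u)x,u)$ together with \eqref{eqn:A-1} of Lemma~\ref{lem.RXUY} gives $g(R(x,u)y,u)=-g(R(x,u)y,u)=0$, hence $g(R(x,u)u,y)=-g(R(x,u)y,u)=0$. Thus $R(x,u)u$ is orthogonal to $u$ and to every vector of $u|_p^\perp$ perpendicular to $x$, so $R(x,u)u=f(x)\,x$ for a scalar $f(x)$. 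To see that $f(x)$ does not depend on $x$, I would write $f(x)=g(x,x)\,g(R(x,u)u,x)=-g(x,x)\,g(R(x,u)x,u)$ and compare two unit vectors via Lemma~\ref{lem.RXUY}, which gives $R(x,u)x=R(y,u)y$ when $x,y$ have the same causal character and $R(x,u)x=-R(y,u)y$ otherwise; in both cases the sign cancels against the factor $g(x,x)$ relative to $g(y,y)$, so $f(x)=f(y)$. Setting $f=f(p)$ and extending linearly in $X$ then yields \eqref{eqn:1-3}, with $f$ smooth since it is a curvature contraction in any local frame adapted to $u$.

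For \eqref{eqn:1-4} (Step~2), Lemma~\ref{lem.XYZOrth} ensures $R(X,Y)Z\in u|_p^\perp$ whenever $X,Y,Z\in u|_p^\perp$, so $R_p$ restricts to a genuine algebraic curvature tensor $\widehat R$ on the non-degenerate scalar-product space $(u|_p^\perp,g)$. Since $\dim u|_p^\perp=n-1\geq3$, I would use Lemma~\ref{lem.PiXY}(2) along a chain of orthonormal $2$-frames of $u|_p^\perp$ — connected under the moves that Lemma covers, together with the plane-rotations and boosts used in its proof — to conclude that every non-degenerate $2$-plane of $u|_p^\perp$ has one and the same sectional curvature $h=h(p)$. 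The classical fact that an algebraic curvature tensor of constant sectional curvature $h$ must equal $h\bigl(g(Y,Z)X-g(X,Z)Y\bigr)$ (see e.g.~\cite{o1983semi}, where the semi-Riemannian subtlety that only non-degenerate planes carry a sectional curvature is handled by polarization) then gives \eqref{eqn:1-4}. Equivalently and more directly, local isotropy makes $\widehat R$ invariant under the entire pseudo-orthogonal group of $u|_p^\perp$, and for dimension $\geq3$ the space of such invariant algebraic curvature tensors is one-dimensional, spanned by $X\mapsto g(Y,Z)X-g(X,Z)Y$.

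Together \eqref{eqn:1-3} and \eqref{eqn:1-4} are exactly the asserted form of $R$, so Steps~1 and~2 finish the proof. I expect Step~2 to be the only place demanding genuine care: upgrading the pairwise equalities of Lemma~\ref{lem.PiXY}(2) to a single constant sectional curvature over \emph{all} non-degenerate planes of $u|_p^\perp$ needs a case split over causal characters and a connectedness argument for orthonormal frames, and the passage from constant sectional curvature to the explicit tensor form must be performed with the indefinite signature in mind. Step~1 is comparatively routine, the one subtlety there being the causal-character sign bookkeeping when $u|_p^\perp$ contains vectors of both causal types.
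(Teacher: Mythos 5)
Your proposal is correct and rests on the same three lemmas as the paper, but it executes the first half by a different route. For Eq.~\eqref{eqn:1-3} the paper invokes Lemma~\ref{lem.PiXY}(1) to obtain a pointwise sectional curvature $\Psi(p)$ common to all non-degenerate planes containing $u\rvert_p$, writes $g\big(R(x+\lambda y,u)u,x+\lambda y\big)=\varepsilon g(x+\lambda y,x+\lambda y)\Psi(p)$, and polarizes in $\lambda$ (using continuity to reach the values of $\lambda$ for which $x+\lambda y$ is null); you instead argue directly from Lemma~\ref{lem.RXUY}, showing that $R(x,u)u$ has no component along $u$ or along $u\rvert_p^\perp\cap x^\perp$, and that the surviving eigenvalue is independent of $x$ by the causal-character case analysis. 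Your route avoids both the continuity step and the detour through sectional curvatures, at the cost of the sign bookkeeping you flag; the two arguments are of comparable length and both are sound. For Eq.~\eqref{eqn:1-4} the approaches coincide: Lemma~\ref{lem.XYZOrth} kills the $u$-component, and constancy of sectional curvature on $u\rvert_p^\perp$ plus the classical algebraic lemma gives the tensor form. Here you are in fact more explicit than the paper about the one genuinely delicate point --- Lemma~\ref{lem.PiXY}(2) only equates curvatures of planes spanned by pairs drawn from an orthogonal triple, and upgrading this to \emph{all} non-degenerate planes of $u\rvert_p^\perp$ requires an additional connectedness or polarization argument that the paper compresses into a citation of Kobayashi--Nomizu. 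Your alternative of invoking invariance of the restricted curvature tensor under the full isometry group of $u\rvert_p^\perp$ (which local isotropy does supply) and the one-dimensionality of the space of invariant algebraic curvature tensors is arguably the cleanest way to close that gap.
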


\begin{proof}
We will first prove \eqref{eqn:1-3}. At any point $p$ of $M$ the sectional curvature $\Psi(p)$ of a non-degenerate plane containing $u\vert_p$ is independent of the choice of the plane, and therefore it is a pointwise function on $M$.  Consider now two vectors $x,y$ orthogonal to $u\rvert_p$ and a real number $\lambda$ such that the vector $x+\lambda y$ is not null. Then Lemma \ref{lem.PiXY} implies that:
\begin{equation}
    g\big(R(x+\lambda y,u)u,x+\lambda y\big) = \varepsilon g(x+\lambda y,x+\lambda y)\Psi(p).\label{eqn:A-37}
\end{equation}

\noindent 
Thanks to continuity, \eqref{eqn:A-37} holds for $\lambda=1$, which leads us to

\begin{equation}
    g\big(R(x,u)u - \varepsilon\Psi(p)x,y\big) = 0 \label{eqn:A-38}
\end{equation}

\noindent 
for every vector $y$ orthogonal to $u$, which proves this for $f=\varepsilon\Psi$.

\vspace{0.5em}

We will now prove \eqref{eqn:1-4}. Using Lemma \ref{lem.PiXY} along with the algebraic properties of the Riemann tensor 
(cf. \cite{kobayashi1963geom1}) it follows that at each point $p$ there exists a real 
number $h(p)$ such that

\begin{equation}
    g\Big(R(x,y)z -h(p)\big(g(y,z)x - g(x,z)y\big),w\Big) = 0
\label{eqn:A-39}
\end{equation}

\noindent 
for every $x,y,z,w\in T_pM$ orthogonal to $u\vert_p$. Additionally, using Lemma
\ref{lem.XYZOrth} we get

\begin{equation}
    g\Big(R(x,y)z -h(p)\big(g(y,z)x - g(x,z)y\big),u\Big) = 0.
\label{eqn:A-40}
\end{equation}
\end{proof}

\begin{theorem}
If the Riemann tensor of a semi-Riemannian manifold $(M,g)$ satisfies the relations
\begin{align}
    & R(X,u)u = fX, \label{eqn:A-41}\\[5pt]
    & R(X,Y)Z = h\big(g(Y,Z)X - g(X,Z)Y\big),\label{eqn:A-42}
\end{align}

\noindent 
with $u$ a unit vector field and $X,Y,Z$ vector fields orthogonal to $u$, then it also 
must satisfy the following relations:
\begin{align}
    & R(X,Y)u = 0,                      \label{eqn:A-43}\\
    & R(X,u)Y = -\varepsilon f g(X,Y)u. \label{eqn:A-44}
\end{align}
\end{theorem}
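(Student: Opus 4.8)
The plan is to verify both identities pointwise by splitting each curvature vector into its component along $u$ and its component in $u^{\perp}$, using only the universal algebraic symmetries of the Riemann tensor (skew-symmetry in the first pair of slots, skew-symmetry in the last pair, and the pair-exchange symmetry) together with the hypotheses \eqref{eqn:A-41}--\eqref{eqn:A-42}. The preliminary observation is that, since $u$ is a unit vector, $g(u,u)=\varepsilon=\pm1\neq0$, so $u$ is non-degenerate, $T_pM=\mathrm{span}(u)\oplus u^{\perp}$, and $g$ is non-degenerate on $u^{\perp}$; hence a vector $v$ is recovered from its pairings as $v=\varepsilon\,g(v,u)\,u+v^{\perp}$, where $v^{\perp}\in u^{\perp}$ is pinned down by $g(v^{\perp},W)=g(v,W)$ for $W\perp u$.

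First I would prove \eqref{eqn:A-43}. The $u$-component of $R(X,Y)u$ is zero because $g\big(R(X,Y)u,u\big)=0$ for any curvature tensor. For $W\perp u$, skew-symmetry in the last pair gives $g\big(R(X,Y)u,W\big)=-g\big(R(X,Y)W,u\big)$; but $X,Y,W$ are all orthogonal to $u$, so \eqref{eqn:A-42} makes $R(X,Y)W=h\big(g(Y,W)X-g(X,W)Y\big)$ a vector in $u^{\perp}$, and its pairing with $u$ vanishes. Both parts of $R(X,Y)u$ are therefore zero.

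Next I would prove \eqref{eqn:A-44}. The $u$-component follows from skew-symmetry in the last pair and \eqref{eqn:A-41}: $g\big(R(X,u)Y,u\big)=-g\big(R(X,u)u,Y\big)=-g(fX,Y)=-f\,g(X,Y)$. For the orthogonal part, take $W\perp u$ and use the pair-exchange symmetry, $g\big(R(X,u)Y,W\big)=g\big(R(Y,W)X,u\big)$; once more $X,Y,W\perp u$, so by \eqref{eqn:A-42} the vector $R(Y,W)X$ lies in $u^{\perp}$ and the pairing with $u$ is zero. Reassembling, $R(X,u)Y=\varepsilon\,g\big(R(X,u)Y,u\big)\,u=-\varepsilon f\,g(X,Y)\,u$, which is \eqref{eqn:A-44}.

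I do not expect a real obstacle: the whole argument is a short piece of bookkeeping, and the only things to watch are the signs produced by the curvature symmetries and the factor $\varepsilon$ coming from $1/g(u,u)=\varepsilon$ when a vector is rebuilt from its $u$-component. Note that no appeal to local isotropy or to the earlier Lemmas is needed here, because the right-hand side of \eqref{eqn:A-42} is manifestly orthogonal to $u$. As an alternative, \eqref{eqn:A-43} could be deduced from the first Bianchi identity $R(X,Y)u+R(Y,u)X+R(u,X)Y=0$ once \eqref{eqn:A-44} is in hand, but the direct verification above is shorter and logically self-contained, so I would keep it.
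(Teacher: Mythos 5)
Your proof is correct and follows essentially the same route as the paper: decompose each curvature vector into its $u$-component and $u^{\perp}$-component, kill the latter using the curvature symmetries together with the fact that the right-hand side of \eqref{eqn:A-42} lies in $u^{\perp}$, and read off the $u$-component from \eqref{eqn:A-41}. The only difference is that you spell out the orthogonal part of $R(X,u)Y$ via the pair-exchange symmetry, a step the paper compresses into ``proved similarly.''
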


\begin{proof}
In order to prove the first relation, we will prove that $R(X,Y)u$ is orthogonal to every vector field. Eq. \eqref{eqn:A-41} implies that
\begin{equation}
    g\big(R(X,Y)u,u\big) = -g\big(R(X,Y)u,u\big) = 0.
\end{equation}
Now, if $Z$ is an arbitrary vector field orthogonal to $u$ then it's also orthogonal to $R(X,Y)u$ due to \eqref{eqn:A-42}:
\begin{equation}
    g\big(R(X,Y)u,Z\big) = - g\big(R(X,Y)Z,u\big) = 0.
\end{equation}
Eq. \eqref{eqn:A-44} is proved similarly:
\begin{equation}
    g\big(R(X,u)Y,u\big) = -g\big(R(X,u)u,Y\big) = -fg(X,Y).
\end{equation}
\end{proof}

\noindent
Since every other component of the Riemann tensor vanishes identically, it follows that the conditions \eqref{eqn:1-3}, \eqref{eqn:1-4} completely specify the Riemann tensor. 

\section{The Bianchi identities}
\label{Sec.Bianchi}

We will now use the second Bianchi identities to derive some important relations involving the differentials $df,dh$ of the sectional curvatures $f,h$ respectively. The main result proved in this section is the following. 

\begin{theorem}
\label{lem.OrthCond}
Let $(M,g)$ be a semi-Riemannian manifold with dimension $\geq 4$ whose Riemann tensor $R$ satisfies \eqref{eqn:1-3} and \eqref{eqn:1-4}. If $X,Y$ are vector fields on $M$ that are orthogonal to $u$, then
\begin{align}
& df(X) = -(h-\varepsilon f) g(X,\nabla_u u), \label{eqn:3-1} \\
& g(\nabla_X u, Y) = g(\nabla_Y u, X), \label{eqn:3-2} \\
& dh(X) = 0. \label{eqn:3-3}
\end{align}
\end{theorem}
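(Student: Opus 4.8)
The plan is to read off the three identities by specializing the second Bianchi identity
\[
(\nabla_X R)(Y,Z)W+(\nabla_Y R)(Z,X)W+(\nabla_Z R)(X,Y)W=0
\]
in three different ways, in each case putting some arguments in the direction of $u$ and the rest orthogonal to it, and then comparing coefficients in the resulting algebraic relation between vectors. The only input about $R$ will be \eqref{eqn:A-41}--\eqref{eqn:A-44} together with the fact that every other component vanishes; in particular $R$ restricted to the distribution $u^{\perp}$ is the constant-curvature operator $R(Y,Z)W=h\big(g(Z,W)Y-g(Y,W)Z\big)$. I will use throughout that $g(\nabla_X u,u)=0$ for all $X$ (since $g(u,u)$ is constant) and, crucially, that $\nabla R$ is a tensor: expanding
\[
(\nabla_X R)(Y,Z)W=\nabla_X\!\big(R(Y,Z)W\big)-R(\nabla_X Y,Z)W-R(Y,\nabla_X Z)W-R(Y,Z)\nabla_X W ,
\]
I may split each term such as $\nabla_X Y$ as $P(\nabla_X Y)-\varepsilon\,g(Y,\nabla_X u)\,u$, where $P$ is the projection onto $u^{\perp}$, into its $u^{\perp}$-part and its $u$-component, the latter then evaluated with \eqref{eqn:A-41}--\eqref{eqn:A-44}.

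First I would take $X,Y,Z$ orthogonal to $u$ and $W=u$. Since $R(Y,Z)u=0$ on $u^{\perp}$ by \eqref{eqn:A-43}, the expansion above collapses --- using $R(u,X)u=-fX$ and \eqref{eqn:A-42} for $R(Y,Z)\nabla_X u$ --- to $(\nabla_X R)(Y,Z)u=(h-\varepsilon f)\big(g(\nabla_X u,Y)Z-g(\nabla_X u,Z)Y\big)$, and likewise for the cyclic terms. Summing, dividing by $h-\varepsilon f\neq 0$ (this is where \eqref{eqn:1-5} is used), and comparing the coefficient of $Z$ --- legitimate because $\dim u^{\perp}=n-1\geq 3$ lets us pick $X,Y,Z$ linearly independent --- yields \eqref{eqn:3-2}. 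Next I would take $X,Y$ orthogonal to $u$ and apply the identity to $(\nabla_X R)(Y,u)u+(\nabla_Y R)(u,X)u+(\nabla_u R)(X,Y)u=0$: in the first two terms all covariant-derivative corrections cancel against $\nabla_X(fY)$, resp. $\nabla_Y(-fX)$ (here \eqref{eqn:A-44} enters), leaving $df(X)Y-df(Y)X$, while the third term, evaluated with \eqref{eqn:A-41} and \eqref{eqn:A-42}, contributes $(h-\varepsilon f)\big(g(\nabla_u u,X)Y-g(\nabla_u u,Y)X\big)$; comparing the coefficients of two independent vectors of $u^{\perp}$ gives \eqref{eqn:3-1}.

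Finally I would take all of $X,Y,Z,W$ orthogonal to $u$ and pair the Bianchi identity with an auxiliary $W'\in u^{\perp}$. On $u^{\perp}$ the operator $R$ is the constant-curvature one and $\nabla g=0$, so the terms of the expansion reassemble, exactly as in the computation of $\nabla_X$ of a curvature operator of constant curvature $h$, into $dh(X)\big(g(Z,W)Y-g(Y,W)Z\big)$ plus terms proportional to $u$; the latter arise only from the $u$-components of $\nabla_X Y,\nabla_X Z,\nabla_X W$ and die upon pairing with $W'$. The identity therefore reduces to the classical Schur combination
\[
dh(X)\big(g(Z,W)Y-g(Y,W)Z\big)+dh(Y)\big(g(X,W)Z-g(Z,W)X\big)+dh(Z)\big(g(Y,W)X-g(X,W)Y\big)=0
\]
restricted to $u^{\perp}$; taking $Y=W$ a unit vector and $X,Z$ orthonormal and orthogonal to it, which $\dim u^{\perp}\geq 3$ permits, forces $dh(X)=0$ for every $X\perp u$, i.e. \eqref{eqn:3-3}.

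I expect the genuine work to lie not in any conceptual difficulty but in the bookkeeping of these covariant-derivative expansions: in each term $R(\nabla_X Y,Z)W$ one must cleanly separate the $u^{\perp}$-part (handled by \eqref{eqn:A-42}) from the $u$-component (handled by \eqref{eqn:A-43}--\eqref{eqn:A-44}), and in the last step one must verify that everything beyond the $dh$ term really is a multiple of $u$ --- this is the hand version of $\nabla g=0$ and is where sign slips are most likely. It is worth stressing that the hypotheses are used sharply --- $n\geq 4$, so that $\dim u^{\perp}\geq 3$, is needed for the coefficient comparisons in the first and last steps, while $h-\varepsilon f\neq 0$ is needed to divide through in the first two --- and that it is precisely the $W=u$ specialization that produces the symmetry \eqref{eqn:3-2}, the ingredient that will later make $u^{\perp}$ integrable via Poincar\'e's lemma.
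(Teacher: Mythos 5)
Your proposal is correct: all three computations check out (I verified the coefficient bookkeeping in each of your three specializations, including the cancellation of every $h$-term except $dh(X)\big(g(Z,W)Y-g(Y,W)Z\big)$ in the last step), and you correctly flag the two places where the standing hypotheses enter --- $\dim u^{\perp}\geq 3$ for the linear-independence arguments and $h-\varepsilon f\neq 0$ for the divisions. The route differs from the paper's in how the second Bianchi identity is sliced. The paper evaluates the single contraction $(\nabla_u R)(X,Y)Z+(\nabla_X R)(Y,u)Z+(\nabla_Y R)(u,X)Z=0$ with $X,Y,Z\perp u$ chosen Fermi-parallel along the flow of $u$, obtaining one master identity (their Eq.~\eqref{eqn:C-20}) whose $u$-parallel component yields \eqref{eqn:3-1} and whose $u$-orthogonal component yields \eqref{eqn:3-2}; only \eqref{eqn:3-3} gets a separate contraction, which coincides with your Schur-type argument. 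You instead use two further specializations --- last slot equal to $u$ for \eqref{eqn:3-2}, and last two slots equal to $u$ for \eqref{eqn:3-1} --- and handle the $u$-components of $\nabla_X Y$ etc.\ by direct projection rather than by importing the Fermi-parallel frame. What your version buys is self-containedness (no appendix on Fermi derivatives, no appeal to ``it suffices to prove a tensor identity on a special frame'') and cleaner isolation of which curvature component drives which conclusion; what the paper's version buys is a single intermediate formula \eqref{eqn:C-20} that it reuses later to compute $dh(u)$ and the second fundamental form of the slices, which your slicing does not produce as a byproduct. Both are valid proofs of the stated theorem.
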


\begin{proof}

Instead of working with arbitrary vector fields, we shall use vector 
fields $X,Y,Z$ orthogonal to $u$ that are Fermi-parallel along an 
integral curve of $u$ and therefore satisfy the following equations:
\begin{eqnarray}
&& \nabla_u X = -\varepsilon g\left(X,\nabla_u u\right) u, \label{eqn:C-1}\\
&& \nabla_u Y = -\varepsilon g\left(Y,\nabla_u u\right) u, \label{eqn:C-2}\\
&& \nabla_u Z = -\varepsilon g\left(Z,\nabla_u u\right) u. \label{eqn:C-3}
\end{eqnarray}
Given that 
\begin{enumerate}
\item the relations to be proved are tensor equations,
\item the dimension of the space-time is at least $4$,
\item a Fermi-parallel basis of vector fields always exist,
\end{enumerate}
\noindent 
if Theorem \ref{lem.OrthCond} holds for every pair of vector fields orthogonal
to $u$ and Fermi-parallel along an arbitrary integral curve of u then
it holds for every pair of vector fields orthogonal to $u$.

\vspace{0.5em}

\noindent
To see this we begin by considering the second Bianchi identity:
\begin{equation}
(\nabla_u R)(X,Y)Z + (\nabla_X R)(Y,u)Z + (\nabla_Y R)(u,X)Z = 0.
\label{eqn:C-4}
\end{equation}

\noindent
We have
\begin{align}
(\nabla_u R)(X,Y)Z &=			\begin{aligned}[t]
									& \nabla_u\big(R(X,Y)Z\big) - R(\nabla_u X,Y)Z \\
									& - R(X,\nabla_u Y)Z - R(X,Y)\nabla_u Z
								\end{aligned}\nonumber\\
									&= \nabla_u\big(R(X,Y)Z\big) - R(\nabla_u X,Y)Z - R(X,\nabla_u Y)Z, \label{eqn:C-5}\\
\nonumber\\
\nabla_u\big(R(X,Y)Z\big) &= 	\begin{aligned}[t]
									& dh(u)\big(g(Y,Z)X - g(X,Z)Y\big) \\
									& + h\big(g(Y,Z)\nabla_u X - g(X,Z)\nabla_u Y \big),
								\end{aligned}
\label{eqn:C-6}\\
\nonumber\\
R(X,\nabla_u Y)Z &=	 -\varepsilon g(Y,\nabla_u u)R(X,u)Z = \varepsilon f g(X,Z)\nabla_u Y,
\label{eqn:C-7}\\
\nonumber\\
R(\nabla_u X,Y)Z &= -\varepsilon fg(Y,Z)\nabla_u X.
\label{eqn:C-8}\\
\nonumber
\end{align}

\noindent 
Replacing eqs. \eqref{eqn:C-6},\eqref{eqn:C-7} and \eqref{eqn:C-5} gives the first term in the second Bianchi identity \eqref{eqn:C-4}:

\begin{equation}
(\nabla_u R)(X,Y)Z = 	\begin{aligned}[t]
							& (h-\varepsilon f)\big( g(Y,Z)\nabla_u X - g(X,Z)\nabla_u Y \big) \\
							& + dh(u)\big( g(Y,Z)X - g(X,Z)Y \big).
						\end{aligned}\label{eqn:C-9}
\end{equation}

\noindent 
We now address the second term,

\begin{align}
(\nabla_X R)(Y,u)Z &=			\begin{aligned}[t]
									&\nabla_X\big(R(Y,u)Z\big) - R(\nabla_X Y,u)Z \\
									&-R(Y,\nabla_X u)Z - R(Y,u)\nabla_X Z,
								\end{aligned}
\label{eqn:C-10}\\
\nonumber\\
\nabla_X\big(R(Y,u)Z\big) 	&=	 \nabla_X\big(-\varepsilon fg(Y,Z)u\big)\label{eqn:C-11}\\
							&=	-\varepsilon df(X)g(Y,Z)u - \varepsilon fg(Y,Z)\nabla_X u,\nonumber\\
\nonumber\\
R(\nabla_X Y,u)Z &= R\big((\nabla_X Y)_\perp,u\big)Z = -\varepsilon f g(\nabla_X Y,Z)u.\label{eqn:C-12}
\end{align}

\noindent 
Since $u$ is of unit norm, the vector field $\nabla_X u$ is orthogonal to $u$, and then

\begin{equation}
R(Y,\nabla_X u)Z = h\big(g(\nabla_X u,Z)Y - g(Y,Z)\nabla_X u\big).\label{eqn:C-13}
\end{equation}

\noindent 
Lastly, we have

\begin{align}
\nabla_X Z &= (\nabla_X Z)_\perp + \varepsilon g(\nabla_X Z,u)u \label{eqn:C-14}\\
&= (\nabla_X Z)_\perp - \varepsilon g(Z, \nabla_X u)u \nonumber
\end{align}

\noindent 
which leads to

\begin{align}
R(Y,u)(\nabla_X Z)_{||} &= -\varepsilon g(\nabla_X u, Z)R(Y,u)u = -\varepsilon fg(\nabla_X u,Z)Y, \label{eqn:C-15}\\
R(Y,u)(\nabla_X Z)_\perp &= -\varepsilon f g(\nabla_X Z,Y)u = \varepsilon f g(Z,\nabla_X Y)u. \label{eqn:C-16}
\end{align}

\noindent 
The last relation follows from the fact that $g(Z,Y)$ is a constant along the
curve on which these vector fields are defined. Summing \eqref{eqn:C-15} and
\eqref{eqn:C-16} yields

\begin{equation}
R(Y,u)\nabla_X Z = -\varepsilon f \big( g(\nabla_X u,Z)Y - g(Z,\nabla_X Y)u \big)\label{eqn:C-17}
\end{equation}

\noindent 
and finally, substituting \eqref{eqn:C-11}, \eqref{eqn:C-12}, \eqref{eqn:C-13}
and \eqref{eqn:C-17} gives us

\begin{align}
(\nabla_X R)(Y,u)Z &=		\begin{aligned}[t]
								& \varepsilon df(X)g(Y,Z)u - \varepsilon fg(Y,Z)\nabla_X u\\
								& + \varepsilon f g(\nabla_X Y,Z)u\\
								& - h\big(g(\nabla_X u,Z)Y - g(Y,Z)\nabla_X u\big)\\
								& + \varepsilon f \big( g(\nabla_X u,Z)Y - g(Z,\nabla_X Y)u \big)
							\end{aligned}\nonumber\\
&=
							\begin{aligned}[t]
								& (h-\varepsilon f)\big(g(Y,Z)\nabla_X u - g(\nabla_X u,Z)Y \big)\\
								& -\varepsilon df(X)g(Y,Z)u
							\end{aligned}\label{eqn:C-18}
\end{align}

\noindent 
and


\begin{equation}
(\nabla_Y R)(u,X)Z = 	\begin{aligned}[t]
							& (h-\varepsilon f)\big(g(\nabla_Y u,Z)X - g(X,Z)\nabla_Y u\big) \\
							& + \varepsilon df(Y)g(X,Z)u.
						\end{aligned}\label{eqn:C-19}
\end{equation}

\noindent 
Substituting \eqref{eqn:C-9}, \eqref{eqn:C-18} and \eqref{eqn:C-19} leads to the result
\begin{equation}
0 =	\begin{aligned}[t]
		& (h-\varepsilon f)\Big[\big( g(Y,Z)\nabla_u X - g(X,Z)\nabla_u Y\big)\\
		& + \big(g(\nabla_Y u,Z)X - g(\nabla_X u,Z)Y \big)\\
		& + \big(g(Y,Z)\nabla_X u - g(X,Z) \nabla_Y u \big)\Big]\\
		& + dh(u)\big( g(Y,Z)X - g(X,Z)Y \big)\\
		& + \varepsilon\big( df(Y)g(X,Z) - df(X)g(Y,Z) \big)u.
	\end{aligned}\label{eqn:C-20}
\end{equation}

For $X\perp Y$ and $Z=Y$, the component of \eqref{eqn:C-20} which is parallel to 
$u$ gives us \eqref{eqn:3-1}. Eq. \eqref{eqn:3-2} follows by taking the normal 
component of \eqref{eqn:C-20} with respect to $u$, with the vector fields $X,Y,Z$
being orthogonal to $u$ and to each other. To prove \eqref{eqn:3-3} we will 
again use the second Bianchi identity,
\begin{equation}
(\nabla_X R)(Y,Z)Z + (\nabla_Y R)(Z,X)Z + (\nabla_Z R)(X,Y)Z = 0.\label{eqn:C-21}
\end{equation}

\noindent 
with the vector fields $X,Y,Z$ being orthogonal to $u$ and to each other. Then:
\begin{align}
(\nabla_X R)(Y,Z)Z 					&=	\begin{aligned}[t]
											& \nabla_X\big(R(Y,Z)Z\big) - R(\nabla_X Y,Z)Z\\
											& - R(Y,\nabla_X Z)Z - R(Y,Z)\nabla_X Z,
										\end{aligned}\label{eqn:C-22}\\
\nonumber\\
\nabla_X\big(R(Y,Z)Z\big)			&=	\begin{aligned}[t]
											& dh(X)\big(g(Z,Z)Y - g(Y,Z)Z\big) \\
											& + h\big(g(Z,Z)\nabla_X Y - g(Y,Z)\nabla_X Z\big)
										\end{aligned}\label{eqn:C-23}\\
									&=	g(Z,Z)\big(dh(X)Y + h\nabla_X Y\big),\nonumber
\nonumber\\
R\big((\nabla_X Y)_\perp,Z\big)Z	&=	h\big(g(Z,Z)(\nabla_X Y)_\perp - g(\nabla_X Y,Z)Z\big),\label{eqn:C-24}\\
\nonumber\\
R\big(Y,(\nabla_X Z)_\perp\big)Z	&=	h\big(g(\nabla_X Z,Z)Y - g(Y,Z)(\nabla_X Z)_\perp\big)=0,\label{eqn:C-25}\\
\nonumber\\
R(Y,Z)(\nabla_X Z)_\perp	&= h\big( g(Z,\nabla_X Z)Y - g(Y,\nabla_X Z)Z\big)\label{eqn:C-26}\\
							&= -hg(Y,\nabla_X Z)Z.\nonumber
\end{align}

\noindent 
Summing  \eqref{eqn:C-24} and \eqref{eqn:C-26} we get
\begin{align}
R\big((\nabla_X Z)_\perp, Z\big)Z + R(Y,Z)(\nabla_X Z)_\perp	&=	\begin{aligned}[t]
																		& hg(Z,Z)(\nabla_X Z)_\perp\\
																		& - \big(g(\nabla_X Y, Z) + g(Y,\nabla_X Z)\big)hZ
																	\end{aligned}\label{eqn:C-27}\\
																&= hg(Z,Z)(\nabla_X Z)_\perp,\nonumber
\end{align}

\noindent 
while from \eqref{eqn:C-23} and \eqref{eqn:C-27} we get 
\begin{equation}
\big[(\nabla_X R)(Y,Z)Z\big]_\perp = g(Z,Z)dh(X)Y,\label{eqn:C-28}
\end{equation}

\noindent 
and since
\begin{equation}
(\nabla_Y R)(Z,X)Z = -(\nabla_Y R)(X,Z)Z,\label{eqn:C-29}
\end{equation}

\noindent 
it follows that
\begin{equation}
\big[(\nabla_X R)(Y,Z)Z + (\nabla_Y R)(Z,X)Z \big]_\perp = g(Z,Z)\big(dh(X)Y - dh(Y)X).\label{eqn:C-30}
\end{equation}

As for the last term, 
\begin{equation}
(\nabla_Z R)(X,Y)Z =	\begin{aligned}[t]
							& \nabla_Z \big(R(X,Y)Z\big) - R(\nabla_Z X,Y)Z\\
							& - R(X,\nabla_Z Y)Z - R(X,Y)\nabla_Z Z.
						\end{aligned}\label{eqn:C-31}
\end{equation}

\noindent 
We have
\begin{align}
R\big((\nabla_Z X)_\perp,Y\big)Z	&=	\begin{aligned}[t]
											& h\big(g(Y,Z)(\nabla_Z X)_\perp - g(\nabla_Z X,Z)Y\big)\\
											& - hg(\nabla_Z X,Z)Y,
										\end{aligned}\label{eqn:C-32}\\
R\big(X,(\nabla_Z Y)_\perp\big)Z	&=	h\big(g(\nabla_Z Y,Z)X - g(X,Z)(\nabla_Z Y)_\perp\big)\label{eqn:C-33}\\
									&= hg(\nabla_Z Y,Z)X.\nonumber				
\end{align}

\noindent 
Summing these expressions,
\begin{equation}
R\big((\nabla_Z X)_\perp,Y\big)Z + R\big(X,(\nabla_Z Y)_\perp\big)Z + R(X,Y)(\nabla_Z Z)_\perp = 0,\label{eqn:C-34}\\
\end{equation}

\noindent 
and as such the second Bianchi identity \eqref{eqn:C-21} is equivalent to
\begin{equation}
g(Z,Z)\big(dh(X)Y - dh(Y)X\big) = 0,
\label{eqn:C-35}
\end{equation}

\noindent 
which concludes the proof.
\end{proof}

Eq. \eqref{eqn:C-20} can also be used to derive a relation involving $dh(u)$. As we will see in the next section, this relation will give us the second fundamental form of the hypersurfaces of constant time. 

\begin{theorem}
If $X,Y$ are vector fields that are orthogonal to $u$, then 
\begin{equation}
    g(X,\nabla_Y u) = -\frac{dh(u)}{2(h-\varepsilon f)}\, g(X,Y).
\end{equation}
\end{theorem}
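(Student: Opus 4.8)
The plan is to read the identity off Eq.~\eqref{eqn:C-20}, as announced in the remark preceding the statement. Since $g(X,\nabla_Y u)$ is $C^\infty(M)$-bilinear in $(X,Y)$, it is enough to establish the relation at a point, so throughout I may assume that $X,Y,Z$ are orthogonal to $u$ and Fermi-parallel along an integral curve of $u$, which puts \eqref{eqn:C-20} at our disposal. Write $S$ for the endomorphism $X\mapsto\nabla_X u$ of the non-degenerate subbundle $u^\perp$ (note $\nabla_X u\in u^\perp$ because $g(u,u)$ is constant); by \eqref{eqn:3-2} the bilinear form $(X,Y)\mapsto g(SX,Y)$ is symmetric. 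For a non-null $X\in u^\perp$ set $\nu(X):=g(SX,X)/g(X,X)$, the normal curvature in the direction $X$. The goal is to show that $\nu$ is constant, equal to $-dh(u)/\bigl(2(h-\varepsilon f)\bigr)$, and that $S$ is correspondingly a multiple of the identity.

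First I would specialize \eqref{eqn:C-20} to $Z=Y$, with $X$ and $Y$ unit vectors orthogonal to each other (and to $u$). Every term carrying the factor $g(X,Z)=g(X,Y)$ then drops, and taking the inner product of the resulting vector identity with $X$ removes the remaining ones: the $\nabla_u X$ term because $\nabla_u X\parallel u$ by Fermi-parallelism, the term $-g(\nabla_X u,Y)Y$ because $X\perp Y$, and the $u$-term because $X\perp u$. After dividing by $g(X,X)g(Y,Y)=\pm1$ one is left with the scalar relation
\[
(h-\varepsilon f)\bigl(\nu(X)+\nu(Y)\bigr)+dh(u)=0 .
\]
Since $h-\varepsilon f\neq0$ by \eqref{eqn:1-5}, this says $\nu(X)+\nu(Y)=c$ with $c:=-dh(u)/(h-\varepsilon f)$, for \emph{every} pair $X,Y$ of mutually orthogonal unit vectors in $u^\perp$.

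Next I would exploit $n\ge4$. For an arbitrary unit vector $X\in u^\perp$, the orthogonal complement of $X$ inside $u^\perp$ is non-degenerate of dimension $n-2\ge2$, hence contains two mutually orthogonal unit vectors $Y,W$; the three instances $\nu(X)+\nu(Y)=\nu(X)+\nu(W)=\nu(Y)+\nu(W)=c$ force $\nu(X)=c/2$. Thus $g(SX,X)=(c/2)\,g(X,X)$ for every unit $X\in u^\perp$, and by homogeneity for every non-null $X\in u^\perp$. The symmetric bilinear form $\beta(X,Y):=g(SX,Y)-(c/2)g(X,Y)$ is therefore zero on the dense set of non-null vectors, hence vanishes identically by continuity, and polarization — legitimate precisely because $\beta$ is symmetric, i.e.\ because of \eqref{eqn:3-2} — gives $g(SX,Y)=(c/2)g(X,Y)$ for all $X,Y\in u^\perp$, which is the claimed identity $g(X,\nabla_Y u)=-\frac{dh(u)}{2(h-\varepsilon f)}\,g(X,Y)$.

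The computation is essentially bookkeeping once \eqref{eqn:C-20} is available; the points that need a little care are (i) covering the null directions, handled by the density argument, and (ii) the passage from ``$\nu$ is the same in every direction'' to ``$S$ is a scalar operator'', which genuinely uses the symmetry \eqref{eqn:3-2} together with the existence of enough mutually orthogonal directions, i.e.\ $n\ge4$. Neither is a real obstacle, so I expect the proof to be short.
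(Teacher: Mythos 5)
Your proposal is correct and follows essentially the same route as the paper's proof: specialize \eqref{eqn:C-20} to $Z=Y$ with $X\perp Y$ to get $(h-\varepsilon f)\bigl(\nu(X)+\nu(Y)\bigr)+dh(u)=0$, use a third orthogonal direction (available since $n\ge 4$) to conclude that $\nu$ is direction-independent, and finish by polarization via the symmetry \eqref{eqn:3-2}. Your treatment is slightly more careful than the paper's terse ending, in that you explicitly handle null directions by continuity from the dense set of non-null vectors, but this is a refinement of the same argument rather than a different one.
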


\begin{proof}
As before, because this is a tensor equation, it suffices to prove it in an orthonormal basis of vector fields that are Fermi-parallel along the flow of $u$. Let $X,Y$ be non-null vector fields of this basis that are orthogonal to $u$. Assuming that $X,Y$ are orthogonal to each other and to
$u$, \eqref{eqn:C-20} gives us for $Y=Z$

\begin{equation}
dh(u) = -(h-\varepsilon f)\left(\frac{g(X,\nabla_X u)}{g(X,X)} + \frac{g(Y,\nabla_Y u)}{g(Y,Y)}\right).
\label{eqn:3-12}
\end{equation}

\noindent 
Since the dimension of $M$ is $\geq 4$ the aforementioned basis contains a vector field $Z$ orthogonal to $X,Y,u$ such that

\begin{equation}
dh(u) = -(h-\varepsilon f)\left(\frac{g(Z,\nabla_Z u)}{g(Z,Z)} + \frac{g(Y,\nabla_Y u)}{g(Y,Y)}\right)
\label{eqn:3-13}
\end{equation}

\noindent 
and as such 

\begin{equation}
\frac{g(X,\nabla_X u)}{g(X,X)} = \frac{g(Z,\nabla_Z u)}{g(Z,Z)}.\label{eqn:3-14}
\end{equation}

\noindent 
Thus 
\begin{equation}
\frac{g(X,\nabla_X u)}{g(X,X)} = \frac{g(Y,\nabla_Y u)}{g(Y,Y)}
\end{equation}
for every pair of non-null vector fields $X,Y$ orthogonal to each other and to $u$, which in turn leads to \begin{equation}
dh(u) = -2(h-\varepsilon f)\frac{g(X,\nabla_X u)}{g(X,X)}.\label{eqn:3-15}
\end{equation}
\noindent 
The proof then follows from the polarization identity and \eqref{eqn:3-2}.
\end{proof}

\section{Proof of Theorem~\ref{thm-main}}
\label{sec.Proof}

We can now prove the main result of this paper as stated in Section \ref{sec.Statement}. We start by showing that $u$ is orthogonal to a 1-parameter family of hypersurfaces. 

\begin{lemma}

Let $u^\flat = g(u,\cdot)$ be the metrically equivalent 1-form of the vector field $u$. Then 

\begin{equation}
d\left[(h-\varepsilon f)u^\flat \right] = 0.\label{eqn:3-4} 
\end{equation}
\end{lemma}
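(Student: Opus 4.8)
The plan is to expand $d\big[(h-\varepsilon f)u^\flat\big]$ directly through the Levi-Civita connection and then feed in the three identities of Theorem~\ref{lem.OrthCond}. Writing $\omega := (h-\varepsilon f)\,u^\flat$, torsion-freeness gives $d\omega(X,Y) = (\nabla_X\omega)(Y) - (\nabla_Y\omega)(X)$ for all vector fields $X,Y$; since $\nabla_X\omega = \big(dh(X)-\varepsilon\,df(X)\big)u^\flat + (h-\varepsilon f)\,\nabla_X u^\flat$, this becomes
\begin{align*}
d\omega(X,Y) &= \big(dh(X)-\varepsilon\,df(X)\big)g(u,Y) - \big(dh(Y)-\varepsilon\,df(Y)\big)g(u,X) \\
&\qquad + (h-\varepsilon f)\big(g(\nabla_X u,Y) - g(\nabla_Y u,X)\big).
\end{align*}

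Since $d\omega$ is an alternating bilinear form, I would then check it vanishes on the pairs obtained by decomposing each argument into its $u$-parallel and $u$-orthogonal parts, i.e. on pairs $X,Y$ with $X,Y\perp u$ and on pairs $(u,Y)$ with $Y\perp u$ (the pair $(u,u)$ being trivial). For $X,Y\perp u$ the first two terms vanish and the remainder is zero by \eqref{eqn:3-2}. For the pair $(u,Y)$ I would use $g(u,u)=\varepsilon$ together with $g(\nabla_Y u,u)=\tfrac12\,Y\big(g(u,u)\big)=0$ and $\varepsilon^2=1$, which collapses the identity to
\[
d\omega(u,Y) = -\varepsilon\,dh(Y) + df(Y) + (h-\varepsilon f)\,g(\nabla_u u,Y).
\]
Now \eqref{eqn:3-3} kills $dh(Y)$, and substituting \eqref{eqn:3-1} for $df(Y)$ cancels the last term by symmetry of $g$, so $d\omega(u,Y)=0$. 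Assembling the two cases yields \eqref{eqn:3-4}.

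I do not expect a genuine obstacle: the analytic content is already packaged in Theorem~\ref{lem.OrthCond}, and the rest is bookkeeping. The point needing some care is the mixed case, where one must first discard the $g(\nabla_Y u,u)$ term using the constancy of $g(u,u)$, and then recognise that the surviving cancellation is exactly \eqref{eqn:3-1}. The payoff of this lemma is that $(h-\varepsilon f)u^\flat$ is closed, so locally it equals $d\Phi$ for a function $\Phi$ by Poincar\'e's lemma (globally when $M$ is simply-connected); since $h-\varepsilon f\neq 0$ by \eqref{eqn:1-5}, $u^\flat$ is then proportional to $d\Phi$, whence $u$ is orthogonal to the level hypersurfaces of $\Phi$, which supplies the one-parameter family of hypersurfaces used in the next step.
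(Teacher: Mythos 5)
Your proposal is correct and follows essentially the same route as the paper: both reduce the claim to the symmetry of $(X,Y)\mapsto g\big(\nabla_X[(h-\varepsilon f)u],Y\big)$, dispose of the case $X,Y\perp u$ via \eqref{eqn:3-2}, and handle the mixed case $(u,Y)$ via \eqref{eqn:3-1} (with \eqref{eqn:3-3} used to drop the $dh$ term, as the paper does implicitly). The only cosmetic difference is that you phrase the expansion through $\nabla_X\omega$ and torsion-freeness rather than the intrinsic formula $d\omega(X,Y)=X(\omega(Y))-Y(\omega(X))-\omega([X,Y])$.
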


\begin{proof}
Let $\omega = (h-\varepsilon f) u^\flat$. Then for every pair of vector fields $X,Y$ on $M$
\begin{align}
d\omega(X,Y)	&=	X\left(\omega(Y)\right) - Y\left(\omega(X)\right) - \omega\left([X,Y]\right) \label{eqn:3-5} \\
				&=	\begin{aligned}[t]
						& g\left(\nabla_X\left[\left(h-\varepsilon f\right)u\right],Y\right) + g\left(\left(h-\varepsilon f\right)u,\nabla_X Y\right) \\
						& - g\left(\nabla_Y\left[\left(h-\varepsilon f\right)u\right],X\right) - g\left(\left(h-\varepsilon f\right)u,\nabla_Y X\right) \\
						& - g\left(\left(h-\varepsilon f\right)u,\nabla_X Y - \nabla_Y X \right)
					\end{aligned}\nonumber\\
				&= 	g\left(\nabla_X\left[\left(h-\varepsilon f\right)u\right],Y\right) - g\left(\nabla_Y\left[\left(h-\varepsilon f\right)u\right],X\right).\nonumber				
\end{align}
\noindent
The proof is thus concluded if and only if
\begin{equation}
g\left(\nabla_X\left[\left(h-\varepsilon f\right)u\right],Y\right) = 
                g\left(\nabla_Y\left[\left(h-\varepsilon f\right)u\right],X\right).
\label{eqn:3-6}
\end{equation}
If $X,Y$ are orthogonal to $u$ then due to \eqref{eqn:3-2}
\begin{align}
g\left(\nabla_X\left[\left(h-\varepsilon f\right)u\right],Y\right) &= (h-\varepsilon f) g(\nabla_X u, Y) \label{eqn:3-7} \\
&= (h-\varepsilon f) g(\nabla_Y u, X) \nonumber \\
&= g\left(\nabla_Y\left[\left(h-\varepsilon f\right)u\right],X\right). \nonumber 
\end{align}
\noindent
To finish proving this lemma, we need only prove the special case where $Y=u$ and $X$ is orthogonal to $u$. Using Eq.~(3.1) we get 
\begin{align}
g\left(\nabla_X\left[\left(h-\varepsilon f\right)u\right],u\right) &= -df(X)                    \nonumber \\
        &= \left(h-\varepsilon f\right) g\left(X,\nabla_u u\right)                              \nonumber \\
        &= g\left(X,\nabla_u\left[\left(h-\varepsilon f\right)u\right] \right) \label{eqn:3-8} 
\end{align}
\end{proof}

\noindent 
From Poincar\'e's lemma we have that for a sufficiently small neighborhood
$\mathscr{U}$ of $M$ there exists a function $t$ on $\mathscr{U}$ such that
\begin{equation}
dt = \left(h-\varepsilon f\right) u^\flat.\label{eqn:3-9}
\end{equation}
\noindent 
The domain $\mathscr{U}$ can be extended to all of $M$ if it is simply-connected.

\vspace{0.5em}

Since $dt\neq 0$ everywhere, the set $\Sigma_\tau$ of points $p\in\mathscr{U}$ that satisfies the equation $t(p)=\tau$ is a hypersurface of $\mathscr{U}$, while the one-parameter group of diffeomorphisms of $\partial_t$ maps the $\Sigma_\tau$ to $\Sigma_s$ for every pair of $\tau,s$. In order to find out whether the sectional curvature of the 1-parameter hypersurfaces is constant or not, we need to calculate their second fundamental form
\begin{equation}
\text{II}_\tau(X,Y) = \varepsilon \, g_\tau(\nabla_X Y,u)u 
= -\varepsilon \, g_\tau (X,\nabla_Y u)u
= \frac{\varepsilon dh(u)}{2(h-\varepsilon f)} \, g_\tau(X,Y)u,
\label{eqn:3-10}
\end{equation}
\noindent 
where $g_\tau$ is the induced metric on $\Sigma_\tau$. The substitution of the second fundamental form to the Gauss formula for the Riemann tensor $R_\tau$ for the hypersurface $\Sigma_\tau$
\begin{align}
g_\tau\big(R(X,Y)Z,W\big) = g_\tau\big(R_\tau(X,Y)Z,W\big) &+ g_\tau\big(\text{II}_\tau(X,Z),\text{II}_\tau(Y,W)\big)\label{eqn:3-16}\\
& - g_\tau\big(\text{II}_\tau(Y,Z),\text{II}_\tau(X,W)\big)\nonumber
\end{align}
\noindent 
gives us
\begin{equation}
R_\tau(X,Y)Z = \left(h + \varepsilon \left[\frac{dh(u)}{2(h-\varepsilon f)}\right]^2 \right)
              \left(g_\tau(Y,Z)X - g_\tau(X,Z)Y\right).
\label{eqn:3-17}
\end{equation}
From Schur's lemma, which holds for semi-Riemannian manifolds \cite{o1983semi},
we conclude that $\Sigma_\tau$ is a space of constant curvature for every
$\tau$. Therefore, the function 
\begin{equation}
K_\tau = h + \varepsilon \left[\frac{dh(u)}{2(h-\varepsilon f)}\right]^2,
\label{eqn:3-18}
\end{equation}
depends only on $\tau$.

\begin{proof}[Proof of Theorem~\ref{thm-main}]

\noindent 
For an arbitrary $\tau$ and $p\in\Sigma_\tau$ we choose the vectors
$e_1,...,e_{n-1}\in T_p\Sigma_\tau$ and extend them to vector fields
$E_1,...,E_{n-1}$ along the flow of $\partial_t$ using Lie transport:
\begin{equation}
[\partial_t,E_i] = 0, \;\; i=1,...,n-1.\label{eqn:3-19}
\end{equation}

\noindent 
The vectors $e_1,...,e_{n-1}$ are by definition orthogonal to $u$, implying that $\partial_t\perp E_i$ for every $i=1,...,n-1$. 

\vspace{0.5em}

Let us denote by $\pi$ the projection tensor to the orthogonal complement of
$u$. Using eq. \eqref{eqn:3-9}, $\pi$ can be expressed as
\begin{equation}
\pi = g -\varepsilon u^\flat \otimes u^\flat = g - \frac{\varepsilon}{(h-\varepsilon f)^2}dt \otimes dt\label{eqn:3-20},
\end{equation}
\noindent 
and its components with respect to the basis $\{E_i\}$ are
\begin{equation}
\pi_{ij} = g(E_i,E_j)\equiv g_{ij}.\label{eqn:3-21}
\end{equation}
We have
\begin{align}
g(E_i,\nabla_{E_j}u)	&= g\left(E_i,\nabla_{E_j}\left(\frac{1}{h-\varepsilon f}\partial_t\right)\right) \label{eqn:3-22}\\
						&= \frac{1}{h-\varepsilon f}\,g(E_i,\nabla_{E_j}\partial_t) \nonumber\\
						&= \frac{1}{h-\varepsilon f}\,g(E_i,\Gamma_{jt}^t\partial_t + \Gamma_{jt}^k E_k)\nonumber\\
						&= \frac{1}{h-\varepsilon f}\,g_{ik}\Gamma_{jt}^k\nonumber\\
						&= \frac{1}{h-\varepsilon f}\,g_{ik}\cdot\frac{1}{2}g^{k\mu}(g_{\mu j,t} + g_{\mu t,j} - g_{jt,\mu})\nonumber\\
						&= \frac{1}{2(h-\varepsilon f)}\,\delta_i^\mu(g_{\mu j,t} + g_{\mu t,j})\nonumber\\
						&= \frac{1}{2(h-\varepsilon f)}\,g_{ij,t},\nonumber 
\end{align}
\noindent 
On the other hand,
\begin{equation}
dh(u) = \frac{1}{h-\varepsilon f}\,\partial_t h,\label{eqn:3-23}
\end{equation}
\noindent 
and therefore
\begin{equation}
\frac{dh(u)}{h-\varepsilon f} = \frac{1}{(h-\varepsilon f)^2}\,\partial_t h.\label{eqn:3-24}
\end{equation}
\noindent 
Since $dh(X)=0$ for every vector field $X$ orthogonal to $u$, $h$ is a function of $\tau$ and the left term of \eqref{eqn:3-24} is also only a function of $\tau$. Furthermore, the right term of \eqref{eqn:3-24} is a product of a function of $\tau$ with $K_\tau$. Thus, it follows that $f$ only depends on $\tau$:
\begin{equation}
df(X) = 0, ~\textrm{for~every}~ X\perp u,\label{eqn:3-25}
\end{equation}
\noindent 
which also shows that the flow of $u$ is geodesic (cf. \eqref{eqn:3-1}). Lastly, using \eqref{eqn:3-25} we get
\begin{equation}
g_{ij,\,t} = -\frac{\partial_t h}{h-\varepsilon f}\,g_{ij},\label{eqn:3-26}
\end{equation}
\noindent 
or, equivalently, 
\begin{equation}
\mathcal{L}_{\partial_t}\pi = -\frac{\partial_t h}{h-\varepsilon f}\pi.\label{eqn:3-27}
\end{equation}
\noindent 
Eq. \eqref{eqn:3-26} can be integrated in a coordinate independent way. Let us choose an arbitrary point $p\in\Sigma_0$ and arbitrary vectors $x,y\in T_pM$. The function
\begin{equation}
\psi = -\frac{\partial_t h}{h-\varepsilon f}\label{eqn:3-28}
\end{equation}
\noindent 
is constant on the hypersurfaces $\Sigma_\tau$, and the function $\tau\longrightarrow F(\tau)$ defined by
\begin{equation}
F(\tau) = (\phi_\tau^*\pi)(x,y),\label{eqn:3-29}
\end{equation}
\noindent 
where $\{\phi_\tau\}$ is the one-parameter family of diffeomorphisms of
$\partial_t$, is smooth and its derivative is equal to
\begin{align}
F'(\tau)	&= \frac{d}{ds}F(\tau + s)\bigg\rvert_{s=0} \nonumber \\
			&= \frac{d}{ds}\bigg\rvert_{s=0}\left(\phi_\tau^*  \phi_s^* \pi\right)(x,y)\nonumber\\
			&= (\phi_\tau^* \mathcal{L}_{\partial / \partial t}\pi)(x,y)\nonumber\\
			&= (\phi_\tau^* \psi\pi)(x,y)\nonumber\\
			&= \psi\circ\phi_\tau(p) F(\tau). \label{eqn:3-30}
\end{align}
\noindent 
The proof is concluded by solving \eqref{eqn:3-30} and then changing variables
\begin{equation}
\bigg\rvert\frac{1}{h-\varepsilon f}\bigg\rvert dt \longrightarrow dt.\label{eqn:3-31}
\end{equation}
\end{proof}

\section{Conclusions}
\label{sec.Impl}

In this work we derive a geometrical necessary and sufficient condition for a
semi-Riemannian space to be locally isotropic with respect to a unit vector
field $u$. We prove that this is fulfilled when the vector field pointwise
distinguishes two sectional curvatures. Our result might be envisaged as an
extension of the theorem concerning local isometry between spaces of equal
constant curvature to a class of spaces of non-constant curvature. Due to the
properties of the Riemann tensor, this condition can be expressed in terms of
it without any further assumption. 
Since we do not need to make any kind of assumption about the field equations
or matter fields, this represents an advantage from the point of view of
theoretical cosmology and the study of alternative theories of gravity.

\vspace{0.5em}

In addition, the local isometry is extended to a global one if the space is
simply-connected.  Since the equation of geodesic deviation for a specific
class of geodesics parallel or orthogonal to $u$ is independent of direction,
our condition is an \textit{isotropy of geodesic deviation}.

\section{Acknowledgements}

This work was supported by the National Key R\&D Program of China (2016YFA0400702) and the National Science Foundation of China (11721303, 11873022 and 11991053). We thank H. Ringstr\"om and L. Andersson for their input.

\appendix

\section*{Appendix: Fermi-parallel frame fields}
\setcounter{section}{1}

A \textit{Fermi-parallel} orthonormal basis of vector fields along a timelike curve 
$\gamma$ of unit speed $u$ is a set of vector fields $E_1,...,E_n$
such that $E_n=u$ and
\begin{equation}
\nabla_u E_i = g(\nabla_u u, E_i)u, \;\; i=1,...,n-1.\label{eqn:B-1}
\end{equation}
\noindent 
In our work we use these vector fields to simplify calculations involving the
Riemann and Ricci tensors. To define such a basis along an arbitrary non-null
curve we need a generalization of the Fermi derivative along non-null vector
fields.

\vspace{0.5em}

Following the work of \cite{hawking1973large}, we seek a derivation
$\mathcal{D}_u$, with $u$ a non-null vector field that satisfies the
following two conditions:
\begin{enumerate}
	\item $\mathcal{D}_u u = 0$,
	\item If $X$ is a vector field orthogonal to $u$ then $\mathcal{D}_u X = (\nabla_u X)_\perp$.
\end{enumerate}
\noindent 
From the first property we have that
\begin{align}
\mathcal{D}_u X_{||} &= \mathcal{D}_u \left(\varepsilon g(u,X)u \right) \nonumber \\
&= \mathcal{D}_u\left(\varepsilon g(u,X)\right)u \nonumber\\
&= \nabla_u \left(\varepsilon g(u,X)\right)u \nonumber\\
&= \nabla_u X_{||} - \varepsilon g(X,u)\nabla_u u, \label{eqn:B-2}
\end{align}
\noindent 
while from the second property we have
\begin{align}
\mathcal{D}_u X_\perp &= \left(\nabla_u X_\perp \right)_\perp \label{eqn:B-3}\\
&= \nabla_u X_\perp - \varepsilon g(\nabla_u X_\perp, u)u \nonumber\\
&= \nabla_u X_\perp - \varepsilon \left[ \nabla_u \left(g\left(X_\perp,u\right)\right) - g\left(X_\perp,\nabla_u u\right) \right]u\nonumber\\
&= \nabla_u X_\perp + \varepsilon g\left(X,\nabla_u u\right)u \nonumber
\end{align}
\noindent 
The sum of Eqs.~\eqref{eqn:B-2} and \eqref{eqn:B-3} gives us the generalization of
the Fermi derivative for unit vector fields,
\begin{equation}
\mathcal{D}_u X = \nabla_u X + \varepsilon g\left(X,\nabla_u u\right)u - \varepsilon g(X,u)\nabla_u u.\label{eqn:B-4}
\end{equation}

\begin{lemma}
Let $\gamma$ be an integral curve of a unit vector field $u$. The product $g(X,Y)$ of 
two vector fields $X,Y$ on $\gamma$ that are \textit{Femi-parallel} along $\gamma$:
\begin{equation}
\mathcal{D}_u X = \mathcal{D}_u Y = 0 \label{eqn:B-5}
\end{equation}
\noindent 
is constant along $\gamma$. 
\end{lemma}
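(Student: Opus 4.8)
The plan is to show that the Fermi derivative $\mathcal{D}_u$ is compatible with the metric $g$ along $\gamma$, i.e. that for vector fields $X,Y$ on $\gamma$ one has
\begin{equation}
u\big(g(X,Y)\big) = g(\mathcal{D}_u X, Y) + g(X, \mathcal{D}_u Y).\label{eqn:Bplan-1}
\end{equation}
Once this identity is established, the lemma follows immediately: if $\mathcal{D}_u X = \mathcal{D}_u Y = 0$ along $\gamma$, the right-hand side vanishes, so $u\big(g(X,Y)\big) = 0$, which is precisely the statement that $g(X,Y)$ is constant along the integral curve $\gamma$.

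First I would expand both sides of \eqref{eqn:Bplan-1} using the explicit formula \eqref{eqn:B-4} for the generalized Fermi derivative, namely $\mathcal{D}_u X = \nabla_u X + \varepsilon g(X,\nabla_u u)u - \varepsilon g(X,u)\nabla_u u$. Substituting this into $g(\mathcal{D}_u X, Y) + g(X, \mathcal{D}_u Y)$ gives
\begin{equation}
g(\nabla_u X, Y) + g(X, \nabla_u Y) + \varepsilon\big[g(X,\nabla_u u)g(u,Y) + g(Y,\nabla_u u)g(u,X)\big] - \varepsilon\big[g(X,u)g(\nabla_u u, Y) + g(Y,u)g(\nabla_u u, X)\big].\label{eqn:Bplan-2}
\end{equation}
The two bracketed correction terms cancel against each other term by term, and what remains is $g(\nabla_u X, Y) + g(X, \nabla_u Y)$, which equals $u\big(g(X,Y)\big)$ by the metric compatibility of the Levi-Civita connection $\nabla$. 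This establishes \eqref{eqn:Bplan-1}.

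I do not expect any serious obstacle here; the only point requiring a little care is bookkeeping the sign factor $\varepsilon = g(u,u) = \pm 1$ and verifying that the correction terms in \eqref{eqn:B-4} are exactly those needed to make $\mathcal{D}_u$ metric. It is worth noting \emph{why} these terms appear: $\mathcal{D}_u$ was constructed precisely so that $\mathcal{D}_u u = 0$ while agreeing with the tangential projection of $\nabla_u$ on vectors orthogonal to $u$, and metric compatibility along $\gamma$ is the structural consequence of this construction. As a sanity check one may take $X = Y = u$: then $g(u,u) = \varepsilon$ is manifestly constant, and indeed $\mathcal{D}_u u = 0$ makes the right side of \eqref{eqn:Bplan-1} vanish, consistently. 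This confirms that a Fermi-parallel orthonormal frame, once orthonormal at one point of $\gamma$, stays orthonormal along all of $\gamma$, which is the property exploited throughout Sections \ref{Sec.Bianchi} and \ref{sec.Proof}.
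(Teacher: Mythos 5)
Your proposal is correct and is essentially the paper's own proof: the paper likewise establishes $\frac{d}{d\tau}g(X,Y) = g(\nabla_u X,Y)+g(X,\nabla_u Y) = g(\mathcal{D}_u X,Y)+g(X,\mathcal{D}_u Y)$ by direct substitution, with the $\varepsilon$-correction terms cancelling in the symmetrized sum exactly as you describe. Your version simply writes out the cancellation that the paper compresses into ``simple substitution.''
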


\begin{proof}
Simple substitution yields
\begin{equation}
\frac{d}{d\tau} g(X,Y) = g\left(\nabla_u X,Y\right) + g\left(X,\nabla_u Y\right) = 
                         g\left(\mathcal{D}_u X,Y\right) + g\left(X,\mathcal{D}_u Y\right).
\label{eqn:B-6}
\end{equation}
\end{proof}

An immediate consequence of this Lemma is that given any curve $\gamma$ of unit speed $u$, 
a vector field $X$ which is Fermi-parallel along $\gamma$ and orthogonal to $u$ at 
some point must be everywhere orthogonal to $u$. Therefore its covariant derivative 
along $\gamma$ is equal to 
\begin{equation}
\nabla_u X = - \varepsilon g(X,\nabla_u u)u. 
\label{eqn:B-7}
\end{equation}
\noindent 
Thus, we can always construct a \textit{Fermi-parallel} set of vector fields
$E_1,...,E_n$ along $\gamma$ such that
\begin{enumerate}
	\item $E_n\equiv u$,
	\item $\nabla_u E_i = -\varepsilon g\left(E_i,\nabla_u u\right) u, \;\; i=1,...,n-1$,
	\item At each point $p$ of $\gamma$ they form an orthonormal basis of $T_pM$.
\end{enumerate}

\nocite{*}

\end{document}